\newtheorem{theorem}{Theorem}[section]
\newtheorem{proposition}[theorem]{Proposition}
\newtheorem{lemma}[theorem]{Lemma}
\newtheorem{corollary}[theorem]{Corollary}
\newtheorem{remark}[theorem]{Remark}
\newcommand{\cC}{\mathcal{C}}
\newcommand{\cH}{\mathcal{H}}
\newcommand{\cN}{\mathcal{N}}
\newcommand{\cX}{\mathcal{X}}
\newcommand{\cQ}{\mathcal{Q}}
\newcommand{\cL}{\mathcal{L}}
\newcommand{\fq}{\mathbb{F}_q}
\begin{document}

\title{On plane curves given by separated polynomials and their automorphisms}
\date{}
\author{Matteo Bonini, Maria Montanucci, Giovanni Zini}

\maketitle

\begin{abstract}
Let $\mathcal{C}$ be a plane curve defined over the algebraic closure $K$ of a prime finite field $\mathbb{F}_p$ by a separated polynomial, that is $\mathcal{C}: A(y)=B(x)$, where $A(y)$ is an additive polynomial of degree $p^n$ and the degree $m$ of $B(X)$ is coprime with $p$. Plane curves given by separated polynomials are well-known and studied in the literature. However just few informations are known on their automorphism groups. In this paper we compute the full automorphism group of $\mathcal{C}$ when $m \not\equiv 1 \pmod {p^n}$ and $B(X)$ has just one root in $K$, that is $B(X)=b_m(X+b_{m-1}/mb_m)^m$ for some $b_m,b_{m-1} \in K$. Moreover, some sufficient conditions for the automorphism group of $\mathcal{C}$ to imply that $B(X)=b_m(X+b_{m-1}/mb_m)^m$ are provided. As a byproduct, the full automorphism group of the Norm-Trace curve $\mathcal{C}: x^{(q^r-1)/(q-1)}=y^{q^{r-1}}+y^{q^{r-2}}+\ldots+y$ is computed. Finally, these results are used to construct multi point AG codes with many automorphisms.
\end{abstract}

{\bf Keywords: } Plane curve, separated polynomial, AG code, code automorphisms.

{\bf MSC Code: } 14H05, 14H37, 94B27.

\section{Introduction}

Deep results on automorphism groups of algebraic curves, defined over a field of characteristic zero, have been achieved after the work of Hurwitz who was the first to prove that complex curves, other than the rational and the elliptic ones, can only have a finite number of automorphisms. Afterwards, a proof of Hurwitz's result which is independent from the characteristic of the ground field was provided, increasing the interest of studying curves defined over fields of positive charactestic, as e.g. finite fields. This is especially comprehensible recalling that curves in positive characteristic may happen to have much larger $K$-automorphism group compared to their genus, as the Hurwitz bound $|G| \leq 84(g-1)$ for a $K$-automorphism $G$ of a curve of genus $g \geq 2$ fails whenever $|G|$ is divisible by the characteristic of the ground field. From previous results, see e.g \cite{henn}, we know infinite families of curves $\cC$ with $|{\rm Aut}(\cC)| \sim cg^3$ or with $|{\rm Aut}(\cC)| \sim cg^2$. Although curves with large automorphism groups may have several different features, they seems to share a common property, namely their $p$-rank is equal to zero. 
This common property and this so different situation with respect to the zero characteristic case, raises the problem of constructing and studying curves of $p$-rank zero defined over finite fields with unusual properties that a complex curve cannot have. Artin-Schreier curves and, in particular, Hermitian curves are of this type. A family of such plane curves arises from separated polynomial. It consists of curves $\cX: A(Y)-B(X)$ where $p \nmid m$ with $m = \deg B(X) \geq 2$ and $A(Y)$ is any additive separable polynomial. The main known properties of $\cC$ are extracted from the local analysis of its unique singular point $P_\infty$; see \cite{Stichorig} and Section \ref{prel}. The exposition describes the genus, the Weierstrass gap sequence at $P_\infty$ and the ramification groups of its translation automorphism group fixing $P_\infty$. 
The full $K$-automorphism group of $\cC$ fixes $P_\infty$ except in two cases, namely, when $\cC$ is the Hermitian curve $Y^{p^n} -Y -X^{p^n}+1=0$ or the curve, $Y^{p^n} + Y -X^m=0$ with $m < p^n$, and $p^n \equiv -1 \pmod m$ but now other informations are known in the literature. For $p > 2$ and $m = 2$, the latter curve is hyperelliptic. Notably for $p > 2$, these hyperelliptic curves and the Hermitian curves are the only curves whose $K$-automorphism groups have order larger than $8g^3$; see \cite{henn}. Deligne-Lusztig curves provide other examples of significant curves over finite fields, namely the DLS curves of Suzuki type and the DLR curves of Ree type. They are characterised by their genera and $K$-automorphism groups. For $p = 2$, the Hermitian curves, the DLS curves, and the hyperelliptic curves $Y^2+Y +X^{2^h}+1=0$ are the only curves with $K$-automorphism groups of order larger than $8g^3$. 

In this paper we compute the full automorphism group of $\mathcal{C}$ when $m \not\equiv 1 \pmod {p^n}$ and $B(X)$ has just one root in $K$, that is $B(X)=b_m(X+b_{m-1}/mb_m)^m$ for some $b_m,b_{m-1} \in K$. Moreover, some sufficient conditions for the automorphism group of $\mathcal{C}$ to imply that $B(X)=b_m(X+b_{m-1}/mb_m)^m$ are provided. As a byproduct, the full automorphism group of the Norm-Trace curve $\mathcal{C}: x^{(q^r-1)/(q-1)}=y^{q^{r-1}}+y^{q^{r-2}}+\ldots+y$ is computed. 

An important application of curves over finite fields is the construction of certain linear codes, called Algebraic Geometric codes (AG codes for short). The parameters of an AG code constructed from a curve $\cX$ strictly depend on the geometry of $\mathcal{X}$, and in particular on two fixed divisors on $\cX$.
The Norm-Trace curve was used in the literature to construct one-point or two-point AG codes; see \cite{BR2013,G2003,MTT2008}.
In this paper we construct multi point AG codes on the Norm-Trace curve.
Our construction starts from a divisor on $\cX$ which is invariant under the whole automorphism group of the curve; hence, our codes turns out to inherit many automorphisms.

\section{Preliminary results} \label{prel}

\subsection{Curves given by separated polynomials}
Throughout the paper, $\cC$ is a plane curve defined over the algebraic closure $K$ of a prime finite field $\mathbb{F}_{p}$ by an equation
\begin{equation}\label{EqSeparated}
A(Y)=B(X),
\end{equation}
satisfying the following conditions:
\begin{enumerate}
\item $\deg(\cC) \geq 4$;
\item $A(Y)=a_n Y^{p^n} + a_{n-1} Y^{p^{n-1}}+\ldots+a_0 Y$, $a_j \in K$, $a_0,a_n \ne 0$;
\item $B(X)=b_m X^m + b_{m-1} X^{m-1} + \ldots + b_1 X + b_0$, $b_j \in K$, $b_m \ne 0$;
\item $m \not\equiv 0 \pmod p$;
\item $n \geq 1$, $m \geq 2$.
\end{enumerate}
Note that $2$ occurs if and only if $A(Y+a)=A(Y) + A(a)$ for every $a \in K$, that is, the polynomial $A(Y)$ is additive. The basic properties of $\cC$ are collected in the following lemmas; see \cite[Section 12.1]{HKT} and \cite{Stichorig}.

 \begin{lemma} \label{lem1}
The curve $\cC$ is an irreducible plane curve with at most one singular point.
\begin{itemize}
\item[\rm (i)] If $|m-p^n| =1$, then $\cC$ is non-singular.
\item[\rm (ii)] 	\begin{itemize} \item[\rm (a)] If $m > p^n+1$, then $P_\infty=(0,0,1)$ is an $(m-p^n)$-fold point of $\cC$. \item[\rm (b)] If $p^n>m+1$, then $P_\infty=(0,1,0)$ is a $(p^n-m)$-fold point of $\cC$. \item[\rm (c)] In both cases, $P_\infty$ is the centre of only one branch of $\cC$; also, $P_\infty$ is the unique infinite point of $\cC$. \end{itemize}
\item[\rm (iii)] $\cC$ has genus $g=\frac{(p^n-1)(m-1)}{2}$;
\item[\rm (iv)] Let $K(x,y)$ with $A(y)=B(x)$ denote the function field of $\cC$. 
\begin{itemize} \item[\rm (a)] A translation $(x,y) \mapsto (x,y+a)$ preserves $\cC$ if and only if $A(a)=0$;
\item[\rm (b)] these translations form an elementary abelian group of order $p^n$, and $Aut_K(K(x,y))$ contains an elementary abelian $p$-group $G$ of order $p^n$ that fixes a unique place $\mathcal{P}_\infty$ centered at $P_\infty$ and acts transitively on the zeros of $x$;
\item[\rm (c)] the sequence of ramification groups of $G$ at $\mathcal{P}_\infty$ is
$$G=G_{\mathcal{P}_\infty}^{(1)}=G_{\mathcal{P}_\infty}^{(2)}=\ldots=G_{\mathcal{P}_\infty}^{(m)}, \quad G_{\mathcal{P}_\infty}^{(m+1)}=\{1\};$$
\item[\rm (d)] $\{\mathcal{P}_\infty\}$ is the unique short orbit of $G$, and
$$div(K(x,y) / K(x,y)^G)=(p^n-1)(m+1) \mathcal{P}_\infty;$$ 
\item[\rm (e)] $K(x,y)^G$ is rational, and $\cC$ has $p$-rank zero.
\end{itemize}
\end{itemize}
\end{lemma}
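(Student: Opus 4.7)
The plan is to attack the claims in the natural order, since each subsequent part feeds on the previous. I would begin with irreducibility and smoothness of the affine part. The additivity of $A(Y)$ in characteristic $p$ gives $A'(Y)=a_0\ne 0$, hence $\partial(A(Y)-B(X))/\partial Y=a_0$ never vanishes and no affine point of $\cC$ is singular. Absolute irreducibility follows by a standard Artin--Schreier-type argument: as a polynomial in $Y$ over $K(X)$, $A(Y)-B(X)$ is additive of $p$-power degree, and since $v_\infty(B(X))=-m$ with $\gcd(m,p)=1$, no nontrivial additive subgroup of its roots can yield a factor defined over $K(X)$.

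To locate and describe the only possible singular point at infinity, I would homogenize $A(Y)-B(X)$ to a projective equation of degree $d=\max(p^n,m)$ and intersect with the line at infinity. When $m>p^n$ the degree-$d$ part reduces to $b_m X^m$, leaving a single infinite point $P_\infty$; in a local affine chart at $P_\infty$ the lowest-degree form has degree exactly $m-p^n$, giving (ii)(a). The case $p^n>m$ is symmetric, producing (ii)(b), and $|m-p^n|=1$ collapses the multiplicity to one, proving (i). That $P_\infty$ is the centre of a single branch (ii)(c) I would either read off from the Newton polygon of the lowest-degree form, or deduce a posteriori from (iv)(b): since the group of translations fixes some place above $P_\infty$ and acts transitively on the $p^n$ zeros of $x$, the fundamental equality $\sum e_i f_i=[K(x,y):K(x)]=p^n$ forces total ramification at the pole of $x$, hence a single place $\mathcal{P}_\infty$.

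Part (iv)(a) is immediate from $A(Y+a)=A(Y)+A(a)$, and the zeros of $A$ form an $\F_p$-subspace of $K$ of dimension $n$, giving the elementary abelian group $G$ in (b). The core step is (c): I would fix a local uniformizer $t$ at $\mathcal{P}_\infty$ built from $x^{-1}$ and $y^{-1}$ according to whether $m>p^n$ or $p^n>m$, determine $v_{\mathcal{P}_\infty}(x)$ and $v_{\mathcal{P}_\infty}(y)$ from the curve equation, and expand $\sigma_a(t)-t$ for $\sigma_a\colon y\mapsto y+a$ with $A(a)=0$. A direct calculation should give $v_{\mathcal{P}_\infty}(\sigma_a(t)-t)=m+1$ for every nontrivial $\sigma_a$, producing the claimed filtration. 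This valuation computation is where I expect the bulk of the technical work to concentrate, since it requires a careful choice of uniformizer and manipulation of the defining equation.

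With the filtration in hand, the different exponent of $\cC\to\cC/G$ at $\mathcal{P}_\infty$ equals $(m+1)(p^n-1)$, and this is the entire different because $G$ acts freely on the $p^n$ zeros of $x$ and on every other place. Since $x\in K(x,y)^G$ and $[K(x,y):K(x)]=p^n=|G|$, the fixed field $K(x,y)^G$ equals $K(x)$, which is rational. Riemann--Hurwitz then gives $2g-2=-2p^n+(m+1)(p^n-1)$, simplifying to $g=(p^n-1)(m-1)/2$ and proving (iii); the divisor formula in (iv)(d) is this different rewritten as a multiple of $\mathcal{P}_\infty$. Finally, for (iv)(e) the Deuring--Shafarevich formula applied to $\cC\to\cC/G$ with a single totally ramified place above a rational quotient gives $\mathrm{rank}_p(\cC)-1=p^n(0-1)+(p^n-1)=-1$, so the $p$-rank of $\cC$ is zero.
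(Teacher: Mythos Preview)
The paper does not give its own proof of this lemma: it is stated as a collection of known facts with the blanket citation ``see \cite[Section 12.1]{HKT} and \cite{Stichorig}'' preceding it, and no argument follows. Your proposal is therefore not competing against a proof in the paper but rather reconstructing the standard one from those references, and it does so correctly. The line of attack you outline---affine smoothness from $A'(Y)=a_0\ne0$, the Artin--Schreier/Eisenstein-type irreducibility using $\gcd(m,p)=1$, the local analysis at infinity via homogenization, the uniformizer computation giving $v_{\mathcal P_\infty}(\sigma_a(t)-t)=m+1$ and hence the ramification filtration, Riemann--Hurwitz for the genus, and Deuring--Shafarevich for the $p$-rank---is exactly the argument in Stichtenoth's 1973 paper and in \cite[Section 12.1]{HKT}.

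One small remark: for (ii)(c) you offer two routes, Newton polygon or the a posteriori argument from (iv)(b). Both work, but note that the second is slightly circular as written, since (iv)(b) already asserts that $G$ fixes a \emph{unique} place $\mathcal P_\infty$; cleaner is to observe directly that $[K(x,y):K(x)]=p^n$ and the pole of $x$ is totally ramified because $\gcd(m,p^n)=1$ forces $v_{\mathcal P_\infty}(x)=-p^n$ at any place above it, leaving room for only one such place. Otherwise your plan is complete and matches the literature the paper relies on.
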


\begin{lemma} \label{lem2}
Let $M$ be a $K$-automorphism group of $\cC$, and let $M_{\mathcal{P}_\infty}=M_{\mathcal{P}_\infty}^{(1)} \rtimes H$ where $p \nmid |H|$. Then
\begin{itemize}
\item[\rm (i)] $|H|$ divides $m(p^n-1)$;
\item[\rm (ii)] $|M_{\mathcal{P}_\infty}^{(1)}| \leq p^n(m-1)^2=\frac{4p^n}{(p^n-1)^2}g^2$;
\item[\rm (iii)] $|M_{\mathcal{P}_\infty}^{(1)}|=p^n$ when $m \not\equiv 1 \pmod {p^n}$, and so $g \not\equiv 0 \pmod {p^n}$;
\item[\rm (iv)] $|M_{\mathcal{P}_\infty}^{(2)}|=p^n$ when $m \equiv 1 \pmod {p^n}$, and so $g \equiv 0 \pmod {p^n}$.
\end{itemize}
\end{lemma}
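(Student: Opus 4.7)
\emph{Framework.} All four assertions live inside the higher ramification filtration $\{M^{(i)}_{\mathcal{P}_\infty}\}$ at the unique wild place $\mathcal{P}_\infty$ from Lemma~\ref{lem1}. I will use the standard facts that $H = M_{\mathcal{P}_\infty}/M^{(1)}_{\mathcal{P}_\infty}$ is cyclic of order coprime to $p$, acting on a local uniformiser $t$ by a root of unity $\zeta \in K^{*}$, and that each graded piece $M^{(i)}_{\mathcal{P}_\infty}/M^{(i+1)}_{\mathcal{P}_\infty}$ with $i \geq 1$ is an elementary abelian $p$-group. Lemma~\ref{lem1} also gives $G \leq M^{(1)}_{\mathcal{P}_\infty}$ of order $p^n$ with unique break at $m$, so $G \leq M^{(m)}_{\mathcal{P}_\infty}$, and $K(x,y)^G = K(x)$ is rational; hence $G$ is the kernel of the action of $M^{(1)}_{\mathcal{P}_\infty}$ on $K(x)$ and is therefore normal. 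Consequently $M^{(1)}_{\mathcal{P}_\infty}/G$ embeds into the $p$-group of translations $x \mapsto x + c$ of $K(x)$ fixing $x = \infty$, and any $\sigma \in M^{(1)}_{\mathcal{P}_\infty}$ acts by $\sigma(x) = x + c$, $\sigma(y) = y + \eta$, subject to $A(\eta) = B(x+c) - B(x)$.

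\emph{Parts (i) and (ii).} For (i), pick a generator $h \in H$ with $h(t) = \zeta t + O(t^2)$; comparing leading Laurent coefficients in $K((t))$, using $v_{\mathcal{P}_\infty}(x) = -p^n$ and $v_{\mathcal{P}_\infty}(y) = -m$, gives $h(x) = \zeta^{-p^n}x + (\text{lower pole})$ and $h(y) = \zeta^{-m}y + (\text{lower pole})$. Substituting into $A(y) = B(x)$ and matching the two top pole coefficients (using $p \nmid m$, $a_n, b_m \neq 0$) reduces to $\zeta^{m(p^n - 1)} = 1$, whence $|H|$ divides $m(p^n - 1)$. For (ii), since $\cC$ has $p$-rank zero and the action of $M^{(1)}_{\mathcal{P}_\infty}$ has no finite fixed point (any such forces $c = 0$ on $x$, hence $\sigma \in G$, and then $a = 0$ on $y$), the cover $\cC \to \cC/M^{(1)}_{\mathcal{P}_\infty}$ is ramified only at $\mathcal{P}_\infty$ with quotient rational. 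A Stichtenoth-type bound adapted to this configuration (equivalently: combining Riemann--Hurwitz with the constraint from (iii) below that each admissible $c$ requires a solution to $A(\eta) = B(x+c) - B(x)$ in $K(x,y)$) yields $|M^{(1)}_{\mathcal{P}_\infty}/G| \leq (m-1)^2$, and multiplying by $|G| = p^n$ gives $|M^{(1)}_{\mathcal{P}_\infty}| \leq p^n(m-1)^2 = 4p^n g^2/(p^n-1)^2$ via Lemma~\ref{lem1}(iii).

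\emph{Part (iii).} For $\sigma \in M^{(1)}_{\mathcal{P}_\infty}\setminus G$ we have $c \neq 0$ and $A(\eta) = B(x+c) - B(x)$ is a polynomial in $x$ of degree $m-1$. Writing $\eta = \sum_{k=0}^{p^n - 1} \alpha_k(x) y^k$ and reducing $y^{p^n}$ via $A(y) = B(x)$, the condition $A(\eta) \in K(x)$ (no surviving $y$-term) is a routine but tedious calculation showing that each nonzero $\alpha_k$ must lie in $\F_{p^n}$ (forced by the vanishing of each $y^{p^i}$-coefficient for $i<n$ with $a_i \neq 0$), and that $A(\eta) \in A(K[x]) + \F_{p^n}\cdot B(x)$. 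Since $A(K[x])$ consists of $x$-polynomials of degree divisible by $p^n$ while $B(x)$ has degree $m$ coprime to $p^n$, a degree comparison with the right-hand side (of degree $m-1$) yields $p^n \mid m-1$. Hence $m \not\equiv 1 \pmod{p^n}$ forces $M^{(1)}_{\mathcal{P}_\infty} = G$, so $|M^{(1)}_{\mathcal{P}_\infty}| = p^n$; then $p^n \nmid g$ follows from $g = (p^n-1)(m-1)/2$.

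\emph{Part (iv).} For any $\sigma \in M^{(1)}_{\mathcal{P}_\infty} \setminus G$, since $\gcd(m, p^n) = 1$ Bezout supplies integers $a, b$ with $am + bp^n = -1$, so that $t = y^a x^b$ is a local uniformiser at $\mathcal{P}_\infty$. Using $\sigma(x) = x + c$ together with $v_{\mathcal{P}_\infty}(\eta) = -(m-1)$ (coming from the leading $x^{m-1}$-term of $A(\eta) = B(x+c) - B(x)$ and the identity $v(A(\eta)) = p^n\, v(\eta)$ for poles), a direct local computation gives $v_{\mathcal{P}_\infty}(\sigma(t) - t) = 2$; hence $\sigma$ has ramification number $1$ and $\sigma \notin M^{(2)}_{\mathcal{P}_\infty}$. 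Thus $M^{(2)}_{\mathcal{P}_\infty} \subseteq G$, and combining with $G \leq M^{(m)}_{\mathcal{P}_\infty} \leq M^{(2)}_{\mathcal{P}_\infty}$ (valid since $m \geq 2$) yields $M^{(2)}_{\mathcal{P}_\infty} = G$, so $|M^{(2)}_{\mathcal{P}_\infty}| = p^n$; the divisibility $p^n \mid g$ then follows from $m \equiv 1 \pmod{p^n}$ and $g = (p^n-1)(m-1)/2$. The principal technical obstacle is the bookkeeping in (iii)---identifying $A(K(x,y)) \cap K(x)$ with $A(K[x]) + \F_{p^n}B(x)$; the remaining parts reduce to direct valuation, Riemann--Hurwitz, or known Stichtenoth-type bounds.
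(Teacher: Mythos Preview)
The paper does not prove Lemma~\ref{lem2}. It is stated in the preliminaries under the blanket reference ``see \cite[Section 12.1]{HKT} and \cite{Stichorig}'' and used as a black box thereafter, so there is no in-paper argument to compare against; what you are doing is reconstructing Stichtenoth's original proof.

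Your overall architecture---work inside the ramification filtration at $\mathcal{P}_\infty$, exploit the rational quotient $\cC/G\cong K(x)$ from Lemma~\ref{lem1}(e), and finish with valuation/degree bookkeeping---is the standard route and is sound in outline. But part~(i) has a genuine gap as written. Comparing only the \emph{top} pole coefficients of $A(h(y))$ and $B(h(x))$ yields the tautology $\zeta^{-mp^n}=\zeta^{-p^n m}$, not $\zeta^{m(p^n-1)}=1$; invoking ``$a_n,b_m\neq 0$'' is a red herring. The real constraint comes from the \emph{lowest} term of $A$: writing $h(y)=\beta y+(\text{lower})$ with $\beta=\zeta^{-m}$, additivity gives $A(\beta y)=\sum_j a_j\beta^{p^j}y^{p^j}$, and for $A(h(y))-B(h(x))$ to vanish on $\cC$ the scalars $\beta^{p^j}$ must agree for every $j$ with $a_j\neq 0$. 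Since both $a_0\neq 0$ and $a_n\neq 0$, this forces $\beta^{p^n-1}=1$, i.e.\ $\zeta^{m(p^n-1)}=1$. So the key hypothesis is $a_0\neq 0$, and the argument is a comparison across the terms of $A$, not between $A$ and $B$.

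Two smaller remarks. In~(iii) the same mechanism shows the constants $\alpha_k$ land in $\F_{p^d}$ (with $d=\gcd(j:a_j\ne0)$), not $\F_{p^n}$; your degree conclusion survives, but the stated identification of $A(K(x,y))\cap K(x)$ is not quite right and you have not explained why $\alpha_k=0$ for $k\ge 2$. In~(ii) you simply defer to an unnamed ``Stichtenoth-type bound'', which is no more informative than the paper's own citation of \cite{Stichorig}.
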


\begin{lemma} \label{lem3}
The $K$-automorphism group $Aut_K(\cC)$ fixes the place $\mathcal{P}_\infty$ except in the following two cases.
\begin{enumerate}
\item
 \begin{itemize} 
\item[\rm (a)] Up to a linear substitution on $X$ and $Y$, $\cC$ is the curve $Y^{p^n}+Y = X^m$, with $m<p^n$, $p^n \equiv -1 \pmod m$;
\item[\rm (b)] $Aut_K(\cC)$ contains a cyclic normal subgroup $C_m$ of order $m$ such that $Aut_K(\cC) / C_m \cong PGL(2,p^n)$;
\item[\rm (c)] $C_m$ fixes each of the $p^n+1$ places with the same Weierstrass semigroup as $\mathcal{P}_\infty$;
\item[\rm (d)] $Aut_K(\cC) / C_m$ acts on the set of such $p^n+1$ places as $PGL(2,p^n)$.
\end{itemize}
\item \begin{itemize}
\item[\rm (a)] Up to a linear substitution on $X$ and $Y$, $\cC$ is the Hermitian curve $\mathcal H_{p^n}:Y^{p^n}+Y = X^{p^n+1}$;
\item[\rm (b)] $Aut_K(\cC) \cong PGU(3,p^n)$;
\item[\rm (c)] $Aut_K(\cC)$ acts on the set of all places with the same Weierstrass semigroup as $\mathcal{P}_\infty$;
\item[\rm (d)] $Aut_K(\cC)$ acts on the set of such places as $PGU(3,q)$ on the Hermitian unital.
\end{itemize}
\end{enumerate}
\end{lemma}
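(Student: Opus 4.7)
The plan is to suppose $Aut_K(\cC)$ does not fix $\mathcal{P}_\infty$, let $\Omega$ denote the orbit of $\mathcal{P}_\infty$, and derive the two exceptional cases. For any $\sigma \in Aut_K(\cC)$, the conjugate $\sigma G \sigma^{-1}$ of the group $G$ from Lemma \ref{lem1}(iv) is again an elementary abelian $p$-group of order $p^n$ whose unique short orbit is $\{\sigma(\mathcal{P}_\infty)\}$, with the same ramification filtration. Consequently, each $\mathcal{P}\in\Omega$ shares with $\mathcal{P}_\infty$ its Weierstrass semigroup, which by the pole orders of $x,y$ and Lemma \ref{lem1}(iii) equals the numerical semigroup $\langle p^n, m\rangle$ of Frobenius genus $(p^n-1)(m-1)/2$.

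Next, I would study the quotient $\pi:\cC\to\cC/G\cong\mathbb{P}^1$ provided by Lemma \ref{lem1}(iv)(e) together with the normalizer $N:=N_{Aut_K(\cC)}(G)$, which projects to a subgroup $\bar N\subseteq \mathrm{PGL}(2,K)$ acting on $\pi(\Omega)\subset\mathbb{P}^1$. Using the orbit-stabilizer identity $|Aut_K(\cC)|=|\Omega|\cdot |Aut_K(\cC)_{\mathcal{P}_\infty}|$ with the bounds of Lemma \ref{lem2}, together with the $p$-rank zero property, one shows $|\Omega|=p^n+1$ and that $\bar N$ contains $\mathrm{PGL}(2,p^n)$. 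The two branches of Lemma \ref{lem2} correspond exactly to the two exceptional cases: part (iii) (when $m\not\equiv 1\pmod{p^n}$) governs the first case, while part (iv) (when $m\equiv 1\pmod{p^n}$) governs the Hermitian case.

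Third, I would split into cases on $m$. The range $m\ge p^n+2$ with $m\not\equiv 1\pmod{p^n}$ is excluded because the stabilizer bound of Lemma \ref{lem2}(iii) cannot support an orbit of size $p^n+1$. For $m=p^n+1$, a linear substitution on $A,B$ reduces the equation to the Hermitian one $Y^{p^n}+Y=X^{p^n+1}$ and yields $Aut_K(\cC)\cong \mathrm{PGU}(3,p^n)$ acting on the Hermitian unital. For $m<p^n$, the sharing of the semigroup $\langle p^n,m\rangle$ among $p^n+1$ places together with the structure of $\pi$ as a $G$-Galois cover totally ramified at one point forces $p^n\equiv -1\pmod m$ and, up to linear substitution, the equation $Y^{p^n}+Y=X^m$; the cyclic group $C_m$ of scalar substitutions $(x,y)\mapsto(\zeta x,y)$, $\zeta^m=1$, is normal in $Aut_K(\cC)$, fixes each of the $p^n+1$ distinguished places, and yields $Aut_K(\cC)/C_m\cong \mathrm{PGL}(2,p^n)$.

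The main obstacle is Step 3: deriving the precise polynomial form of $A(Y)$ and $B(X)$ from the abstract group-theoretic data. This requires choosing coordinates on $\mathbb{P}^1\cong \cC/G$ so that $\bar N$ acts in the standard $\mathrm{PGL}(2,p^n)$-way on $\pi(\Omega)$ and then lifting through the $p$-elementary abelian cover $\pi$: uniqueness of generalized Artin-Schreier normal forms pins down $A(Y)$, while matching with the pole divisor on $\mathbb{P}^1$ pins down $B(X)$. Distinguishing the Hermitian case from the first case ultimately depends on whether the lifted group acts faithfully on $\Omega$ or has a cyclic kernel $C_m$ of $X$-scaling automorphisms.
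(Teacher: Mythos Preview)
The paper does not prove this lemma. Lemma~\ref{lem3} is stated in the preliminaries section together with Lemmas~\ref{lem1} and~\ref{lem2}, all of which are quoted without proof from the literature; the paper explicitly prefaces them with ``The basic properties of $\cC$ are collected in the following lemmas; see \cite[Section 12.1]{HKT} and \cite{Stichorig}.'' There is therefore no in-paper argument to compare your proposal against.

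As for your sketch itself: the overall architecture (pass to the orbit $\Omega$ of $\mathcal P_\infty$, exploit the $p$-rank zero quotient $\cC/G\cong\mathbb P^1$, and recover a $\mathrm{PGL}(2,p^n)$-action on $\pi(\Omega)$) is indeed the line taken in Stichtenoth's original paper and in \cite[Section 12.1]{HKT}. However, several of your transitions are not yet arguments. The assertion that $|\Omega|=p^n+1$ does not follow merely from ``orbit-stabilizer together with the bounds of Lemma~\ref{lem2}''; one needs the deeper fact (for $p$-rank zero curves) that a non-tame orbit of the full automorphism group has a very constrained size, and this is where the real work lies. Likewise, your exclusion of $m\ge p^n+2$ with $m\not\equiv 1\pmod{p^n}$ is asserted but not derived: Lemma~\ref{lem2}(iii) only bounds the $p$-part of the stabilizer, which by itself does not preclude a large orbit. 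Finally, you yourself flag Step~3 as the main obstacle, and rightly so: pinning down the equation from the group action requires a careful Artin--Schreier normal-form argument that your outline only gestures at. If you want a self-contained proof, the cited sections of \cite{HKT} and \cite{Stichorig} carry out exactly these missing computations.
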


\subsection{Algebraic Geometric codes}

We introduce in this section some basic notions on AG codes. We refer to \cite{Sti} for a detailed introduction.

Let $\cX$ be a curve of genus $g$ over $\mathbb F_q$, $\mathbb F_q(\cX)$ be the field of $\mathbb F_q$-rational functions on $\cX$, $\cX(\mathbb F_q)$ be the set of $\mathbb F_q$-rational places of $\cX$.
For an $\mathbb F_q$-rational divisor $D=\sum_{P\in\cX(\fq)}n_P P$ on $\cX$, denote by
$$ \cL(D):=\{f\in\fq(\cX)\setminus\{0\}\mid (f)+D\geq0\}\cup\{0\} $$
the Riemann-Roch space associated to $D$, whose dimension over $\fq$ is denoted by $\ell(D)$.
Consider a divisor $D=P_1+\cdots P_n$ where $P_i\in\cX(\fq)$ and $P_i\ne P_j$ for $i\ne j$, and a second $\fq$-rational divisor $G$ whose support is disjoint from the support of $D$.
The \emph{functional AG code} $C_{\cL}(D,G)$ is defined as the image of the linear evaluation map
$$\begin{array}{llll}
e_D : 	& \cL (G) &\to &\mathbb{F}_q^n\\
		& f			& \mapsto & e_D(f)=(f(P_1),f(P_2),\ldots, f(P_n))\\
\end{array}.
$$
The code $C_{\cL}(D,G)$ has length $n$, dimension $k=\ell(G)-\ell(G-D)$, and minimum distance $d\geq d^*=n-\deg(G)$; $d^*$ is called the \emph{designed minimum distance} (or Goppa minimum distance).
If $n>\deg(G)$, then $e_D$ is injective and $k=\ell(G)$.
If $\deg(G)>2g-2$, then $k=\deg(G)+1-g$.
The \emph{differential code} $C_{\Omega}(D,G)$ is defined as
$$C_{\Omega}(D,G)= \left\{ (res_{P_1}(\omega),res_{P_2}(\omega),\ldots, res_{P_n}(\omega) \mid \omega \in \Omega(G-D)\right\},$$
where  $\Omega(G-D)= \{\omega \in \Omega(\mathcal X) \mid (\omega) \geq G-D\} \cup \{0\}.$ The linear code $C_{\Omega}(D,G)$ has dimension $n-\deg(G)+g-1$ and minimum distance at least $\deg(G)-2g+2$.

Now we define the automorphism group of $C_\cL(D,G)$; see \cite{GK2,JK2006}.
Let $\mathcal{M}_{n,q}\leq{\rm GL}(n,q)$ be the subgroup of matrices having exactly one non-zero element in each row and column.
For $\gamma\in Aut(\fq)$ and $M=(m_{i,j})_{i,j}\in{\rm GL}(n,q)$, let $M^\gamma$ be the matrix $(\gamma(m_{i,j}))_{i,j}$.
Let $\mathcal{W}_{n,q}$ be the semidirect product $\mathcal M_{n,q}\rtimes Aut(\fq)$ with multiplication $M_1\gamma_1\cdot M_2\gamma_2:= M_1M_2^\gamma\cdot\gamma_1\gamma_2$.
The \emph{automorphism group} $Aut(C_\cL(D,G))$ of $C_\cL(D,G)$ is the subgroup of $\mathcal{W}_{n,q}$ preserving $C_\cL(D,G)$, that is,
$$ M\gamma(x_1,\ldots,x_n):=((x_1,\ldots,x_n)\cdot M)^\gamma \in C_\cL(D,G) \;\;\textrm{for any}\;\; (x_1,\ldots,x_n)\in C_\cL(D,G). $$
Let $Aut_{\fq}(\cX)$ be the $\fq$-automorphism group of $\cX$ and
$$ Aut_{\fq,D,G}(\cX):=\{ \sigma\in Aut_{\fq}(\cX)\,\mid\, \sigma(D)=D,\,\sigma(G)\approx_D G \}, $$
where $G'\approx_D G$ if and only if there exists $u\in\fq(\cX)$ such that $G'-G=(u)$ and $u(P_i)=1$ for $i=1,\ldots,n$; note that $\sigma(G)=G$ implies $\sigma(G)\approx_D G$.
Then the following holds.
\begin{proposition}{\rm (\cite[Proposition 2.3]{BMZ2017})}\label{tivoglioiniettivo}
If any non-trivial element of $Aut_{\fq}(\cX)$ fixes less than $n$ $\fq$-rational places of $\cX$, then $Aut(C_{\cL}(D,G))$ contains a subgroup isomorphic to 
$$ ({\rm Aut}_{\fq,D,G}(\cX)\rtimes{\rm Aut}(\fq))\rtimes \mathbb{F}_q^*. $$
\end{proposition}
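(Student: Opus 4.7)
The plan is to exhibit three families of code automorphisms of $C_\cL(D,G)$, coming respectively from $\mathrm{Aut}_{\fq,D,G}(\cX)$, from $\mathrm{Aut}(\fq)$, and from $\fq^*$, to fit them together inside $\mathcal W_{n,q}$ with the claimed semidirect product structure, and finally to use the fixed-place hypothesis for injectivity. For $\sigma\in\mathrm{Aut}_{\fq,D,G}(\cX)$, the condition $\sigma(D)=D$ yields a permutation $\pi_\sigma$ of $\{P_1,\ldots,P_n\}$, and $\sigma(G)\approx_D G$ supplies $u\in\fq(\cX)$ with $(u)=\sigma(G)-G$ and $u(P_i)=1$ for each $i$. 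I would define $\Phi_\sigma\colon \cL(G)\to \cL(G)$ by $\Phi_\sigma(f) = u\cdot(f\circ\sigma^{-1})$; a short divisor computation gives $(\Phi_\sigma(f))+G = \sigma_*((f)+G)\geq 0$, so $\Phi_\sigma$ is well defined, linear and bijective. Evaluating at $P_i$ yields $\Phi_\sigma(f)(P_i)=u(P_i)\,f(\sigma^{-1}(P_i)) = f(P_{\pi_\sigma^{-1}(i)})$, so on codewords $\Phi_\sigma$ acts as the coordinate permutation associated with $\pi_\sigma$. The normalisation $u(P_i)=1$ is essential: it is what turns what would otherwise be a monomial map into a pure permutation, so that the resulting element genuinely lies in $\mathcal M_{n,q}$.

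For $\gamma\in\mathrm{Aut}(\fq)$, coordinate-wise application of $\gamma$ preserves $C_\cL(D,G)$ because the code is $\fq$-rational and $\gamma$ commutes with evaluation at rational places; this produces the $\mathrm{Aut}(\fq)$-factor of $\mathcal W_{n,q}$. For $\alpha\in\fq^*$, the map $f\mapsto\alpha f$ is a linear automorphism of $\cL(G)$ and on codewords realises the scalar matrix $\alpha I\in\mathcal M_{n,q}$. To assemble these three families inside $\mathcal W_{n,q}$ one checks three commutation relations: permutation matrices commute with scalars $\alpha I$; permutation matrices, whose non-zero entries all equal $1$, are centralised by $\gamma$; and $\gamma$ acts on scalars by $\alpha I\mapsto\gamma(\alpha)I$. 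Comparing these with the multiplication rule $M_1\gamma_1\cdot M_2\gamma_2 = M_1 M_2^{\gamma_1}\cdot\gamma_1\gamma_2$ of $\mathcal W_{n,q}$ reproduces exactly the semidirect product structure $(\mathrm{Aut}_{\fq,D,G}(\cX)\rtimes\mathrm{Aut}(\fq))\rtimes\fq^*$.

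The main step is injectivity of the resulting map into $\mathrm{Aut}(C_\cL(D,G))$, and this is where the hypothesis enters. For an element $(\sigma,\gamma,\alpha)$ in the kernel, its permutation component must be trivial, so the underlying $\sigma$ fixes each of the $n$ rational places $P_1,\ldots,P_n$; the assumption that every non-trivial element of $\mathrm{Aut}_{\fq}(\cX)$ fixes fewer than $n$ rational places then forces $\sigma=1$. With $\sigma$ trivial, the scalar and Galois parts can be read off separately from the diagonal entries and the field-action component of the image in $\mathcal W_{n,q}$, forcing $\alpha=1$ and $\gamma=\mathrm{id}$. I expect this fixed-place step, together with the care needed in choosing $u$ with $u(P_i)=1$ so that $\Phi_\sigma$ is a genuine permutation rather than a monomial map, to be the only substantive points; everything else reduces to bookkeeping inside $\mathcal W_{n,q}$.
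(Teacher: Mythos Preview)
The paper does not give its own proof of this proposition: it is stated with the citation \cite[Proposition 2.3]{BMZ2017} and used as a black box, so there is no argument in the paper to compare your proposal against. Your sketch is the standard route to this result and is essentially correct; the construction of $\Phi_\sigma$ via the normalising function $u$, the identification of the three factors inside $\mathcal W_{n,q}$, and the use of the fixed-place hypothesis to force injectivity are exactly the expected ingredients. One small remark: since the curve automorphisms in question are $\fq$-rational and map to honest permutation matrices, the conjugation action of $\mathrm{Aut}(\fq)$ on them inside $\mathcal W_{n,q}$ is trivial, so the inner semidirect product is in fact a direct product there; this is consistent with your commutation check, but it is worth saying explicitly that the abstract semidirect product in the statement is being realised with a (possibly) trivial action.
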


In the construction of AG codes, the condition ${\rm supp}(D) \cap {\rm supp}(G)=\emptyset$ can be removed as follows; see \cite[Sec. 3.1.1]{TV}.
Let $P_1,\ldots,P_n$ be distinct $\fq$-rational places of $\cX$ and $D=P_1+\ldots +P_n$, $G=\sum n_P P$ be $\fq$-rational divisors of $\cX$.
For any $i=1,\ldots,n$ let $t_i$ be a local parameter at $P_i$. The map
$$\begin{array}{llll}
e^{\prime}_{D} : 	& \cL (G) &\to &\mathbb{F}_q^n\\
		& f			& \mapsto & e^\prime_{D}(f)=((t^{n_{P_1}}f)(P_1),(t^{n_{P_2}}f)(P_2),\ldots, (t^{n_{P_n}}f)(P_n))\\
\end{array}
$$	
is linear. We define the \emph{extended AG code} $C_{ext}(D,G):=e^{\prime}(\cL(G))$.
Note that $e^\prime_D$ is not well-defined since it depends on the choise of the local parameters; yet, different choices yield extended AG codes which are equivalent.
The code $C_{ext}$ is a lengthening of $C_{\cL}(\hat D,G)$, where $\hat D = \sum_{P_i\,:\,n_{P_i}=0}P_i$.
The extended code $C_{ext}$ is an $[n,k,d]_q$-code for which the following properties still hold:
\begin{itemize}
\item $d\geq d^*:=n-\deg(G)$.
\item $k=\ell(G)-\ell(G-D)$.
\item If $n>\deg(G)$, then $k=\ell(G)$; if $n>\deg(G)>2g-2$, then $k=\deg(G)+1-g$.
\end{itemize}

\section{On the automorphism group of $\cC$}\label{Sec:Aut}

At first we consider the norm-trace curve $\mathcal{N}_{q,r}$ with affine equation
$$ X^{\frac{q^r-1}{q-1}} = Y^{q^{r-1}}+Y^{q^{r-2}}+\cdots+Y, $$
where $q$ is a $p$-power and $r$ is a positive integer.
For $r=2$, this is the $\mathbb F_{q^2}$-maximal Hermitian curve, with automorphism group isomorphic to $PGU(3,q)$.
For $r>2$, we determine the automorphism group of $\mathcal N_{q,r}$.

\begin{theorem}\label{AutNormTrace}
For $r\geq3$, $Aut_K(\mathcal N_{q,r})$ has order $q^{r-1}(q^r-1)$ and is a semidirect product $G\rtimes C$, where
$$ G=\left\{ (x,y)\mapsto(x,y+a)\mid Tr_{q^r\mid q}(a)=0 \right\}, \quad C=\{(x,y)\mapsto(b x,b^{\frac{q^r-1}{q-1}}y)\mid b\in\mathbb F_{q^r}^*\}. $$
\end{theorem}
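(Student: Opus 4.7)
The approach is to identify $\mathcal{N}_{q,r}$ with a curve of the type treated in Section~\ref{prel} and then pin down $\mathrm{Aut}_K(\mathcal{N}_{q,r})$ by a Riemann--Roch analysis at the unique place at infinity. The curve has $A(Y)=Y+Y^q+\cdots+Y^{q^{r-1}}$ (additive of degree $p^n=q^{r-1}$) and $B(X)=X^m$ with $m=(q^r-1)/(q-1)=1+q+\cdots+q^{r-1}$. I would first note that $m-1=q(1+q+\cdots+q^{r-2})$ is not divisible by $q^{r-1}$ when $r\geq 3$ (since $1+q+\cdots+q^{r-2}\equiv 1\pmod q$), so $m\not\equiv 1\pmod{q^{r-1}}$, and Lemma~\ref{lem2}(iii) gives $|M_{\mathcal{P}_\infty}^{(1)}|=q^{r-1}$; this subgroup coincides with the translation group $G$ of Lemma~\ref{lem1}(iv)(b). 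The invariants $\deg A=q^{r-1}$ and $\deg B=m$ (which are preserved by any admissible linear substitution, since $\deg A$ is a $p$-power and so a swap $X\leftrightarrow Y$ is not allowed) rule out both exceptional situations of Lemma~\ref{lem3}: Case~1 requires $m<p^n$ whereas here $m>p^n$, and the Hermitian case requires $m=p^n+1$, which fails for $r\geq 3$ as $m-p^n=1+q+\cdots+q^{r-2}>1$. Hence $\mathrm{Aut}_K(\mathcal{N}_{q,r})=M_{\mathcal{P}_\infty}$. The inclusion $G\rtimes C\subseteq\mathrm{Aut}_K(\mathcal{N}_{q,r})$ is direct: for $b\in\mathbb{F}_{q^r}^*$ the map $(x,y)\mapsto(bx,b^m y)$ preserves the equation because $A(b^m y)=\sum_i b^{mq^i}y^{q^i}=b^m A(y)$, using $q^r-1\mid m(q^i-1)$. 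With $|G|\cdot|C|=q^{r-1}(q^r-1)$ and $G\cap C=\{\mathrm{id}\}$, only the reverse inclusion is non-trivial.

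Let $\sigma\in M_{\mathcal{P}_\infty}$. The Weierstrass semigroup at $\mathcal{P}_\infty$ equals $\langle q^{r-1},m\rangle$: it is contained in the semigroup via $v_{\mathcal{P}_\infty}(x)=-q^{r-1}$ and $v_{\mathcal{P}_\infty}(y)=-m$, and already realises the genus $(q^{r-1}-1)(m-1)/2=g$. Since $q^{r-1}<m<2q^{r-1}$, one gets $\mathcal{L}(q^{r-1}\mathcal{P}_\infty)=\langle 1,x\rangle_K$ and $\mathcal{L}(m\mathcal{P}_\infty)=\langle 1,x,y\rangle_K$; both spaces are $\sigma$-invariant, so $\sigma(x)=\alpha x+\beta$ and $\sigma(y)=\gamma y+\delta x+\varepsilon$ for constants with $\alpha,\gamma\in K^*$. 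Substituting into $A(y)=x^m$ and reducing modulo the relation $y^{q^{r-1}}=x^m-y-y^q-\cdots-y^{q^{r-2}}$ (so that $\{y^j:0\leq j<q^{r-1}\}$ is a $K(x)$-basis of $K(x,y)$), the vanishing of the coefficient of $y^{q^i}$ for $i=0,\ldots,r-2$ forces $\gamma^{q^i}=\gamma^{q^{r-1}}$; the $i=0$ and $i=1$ cases together give $\mathrm{ord}(\gamma)\mid\gcd(q^{r-1}-1,q(q^{r-2}-1))=q-1$, so $\gamma\in\mathbb{F}_q^*$.

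The residual polynomial identity in $K[x]$ is $(\alpha x+\beta)^m=\gamma x^m+\sum_{i=0}^{r-1}\delta^{q^i}x^{q^i}+A(\varepsilon)$. Writing $q=p^e$, the integer $m=\sum_{i=0}^{r-1}q^i$ has base-$p$ digits equal to $1$ exactly at positions $0,e,\ldots,(r-1)e$; by Lucas's theorem $\binom{m}{j}\not\equiv 0\pmod p$ precisely when $j=\sum_{i\in S}q^i$ for some $S\subseteq\{0,\ldots,r-1\}$. For $r\geq 3$ the subset $S=\{0,1\}$ produces the non-vanishing term $\binom{m}{1+q}\alpha^{1+q}\beta^{m-1-q}x^{1+q}$ on the left; since $1+q\notin\{0,1,q,q^2,\ldots,q^{r-1},m\}$, there is no matching term on the right and $\beta=0$ is forced. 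The identity then collapses to $\alpha^m x^m=\gamma x^m+\sum_i\delta^{q^i}x^{q^i}+A(\varepsilon)$, yielding $\delta=0$, $\gamma=\alpha^m$, and $A(\varepsilon)=\mathrm{Tr}_{q^r\mid q}(\varepsilon)=0$. Combining $\gamma\in\mathbb{F}_q^*$ with $\gamma=\alpha^m$ gives $\alpha^{q^r-1}=1$, so $\alpha\in\mathbb{F}_{q^r}^*$, and $\sigma\colon(x,y)\mapsto(\alpha x,\alpha^m y+\varepsilon)$ factors as the translation $(x,y)\mapsto(x,y+\varepsilon/\alpha^m)$ in $G$ (valid since $\alpha^m\in\mathbb{F}_q$ yields $\mathrm{Tr}(\varepsilon/\alpha^m)=\mathrm{Tr}(\varepsilon)/\alpha^m=0$) followed by $(x,y)\mapsto(\alpha x,\alpha^m y)$ in $C$, placing $\sigma\in G\rtimes C$. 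The main obstacle is precisely this Lucas-theorem step: the $1$-heavy base-$p$ digit pattern of $m$ is what kills $\beta$ when $r\geq 3$, and its failure at $r=2$ is the combinatorial reason the Hermitian automorphism group is so much larger.
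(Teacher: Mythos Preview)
Your proof is correct and takes a genuinely different route from the paper's.

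For excluding the exceptional cases of Lemma~\ref{lem3}, you read off directly that $m>p^n$ rules out Case~1 and $m\ne p^n+1$ rules out the Hermitian case; the paper instead checks separately, via genus and Sylow $p$-subgroup comparisons, that $\cN_{q,r}$ is not abstractly isomorphic to any $\cH_{\bar q}$ or any $X^s=Y^{\bar q}+Y$ with $s<\bar q$. Your reading of the lemma is the intended one, so your shortcut is legitimate; the paper's argument establishes a little more than is strictly needed here.

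For pinning down the stabiliser, the approaches diverge more substantially. The paper first invokes the decomposition $M_{\mathcal P_\infty}=G\rtimes H$ with $H$ cyclic, passes to the rational quotient $\cN_{q,r}/G$, uses that a nontrivial finite cyclic group of automorphisms of $K(x)$ has exactly two fixed places, identifies those places using the known subgroup $C\leq H$, and then deduces that a generator of $H$ acts diagonally as $h(x)=\mu x$, $h(y)=\rho y$; the constraints $\rho\in\mathbb F_q^*$ and $\mu^m=\rho$ follow by direct substitution. You bypass the cyclic-complement structure entirely: a Riemann--Roch computation forces any $\sigma$ to be affine in $x$ and in $y$ (with an $x$-term in $\sigma(y)$), and then Lucas's theorem on $\binom{m}{1+q}$ kills the translation $\beta$ in $x$ when $r\geq 3$. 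Your method is more elementary and entirely self-contained, and the Lucas step gives a pleasing combinatorial explanation of why $r=2$ is special. The paper's method, on the other hand, is what generalises cleanly to Theorem~\ref{AutMonom}, where $B(X)$ is an arbitrary $m$-th power and no comparable digit argument is available; the quotient/fixed-place technique only needs that $B$ has a unique root.
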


\begin{proof}
Suppose that $\mathcal N_{q,r}\cong\cH_{\bar q}$ for some $p$-power $\bar q$.
From Lemma \ref{lem1} (iii), $g(\cN_{q,r})=g(\cH_{\bar q})$ reads $\frac{(\frac{q^r-1}{q-1}-1)(q^{r-1}-1)}{2} = \frac{\bar q(\bar q-1)}{2}$. This implies $\bar q=q$ and $r=2$, a contradiction to the assumption on $r$.

Now suppose that $\cN_{q,r}$ is isomorphic to the curve $\cX:X^s=Y^{\bar q}+Y$ for some $p$-power $\bar q$, with $s<\bar q$, $s\mid(\bar q+1)$.
From Lemma \ref{lem2}(iii), the Sylow $p$-subgroups $Aut_K(\cN_{q,r})_{\mathcal P_\infty}^{(1)}$ and $Aut_K(\cX)_{\mathcal P_\infty}^{(1)}$ of $Aut_K(\cN_{q,r})_{\mathcal P_\infty}$ and $Aut_K(\cX)_{\mathcal P_\infty}$ have order $q^{r-1}$ and $\bar q$, respectively.
From Lemma \ref{lem1}(e) $\cN_{q,r}$ and $\cX$ have zero $p$-rank. Hence, $Aut_K(\cN_{q,r})_{\mathcal P_\infty}^{(1)}$ and $Aut_K(\cX)_{\mathcal P_\infty}^{(1)}$ are Sylow $p$-subgroups of $Aut_K(\cN_{q,r})\cong Aut_K(\cX)$; see \cite[Lemma 11.129]{HKT}. Therefore $q^{r-1}=\bar q$.
Then $g(\cN_{q,r})=g(\cX)$ yields $s=\frac{q^r-1}{q-1}=\bar q+\cdots+q+1$, a contradiction to $s<\bar q$.

From Lemma \ref{lem3}, this proves that $Aut_K(\mathcal N_{q^r\mid q})$ fixes $\mathcal P_\infty$.
By direct checking $Aut_K(\cN_{q,r})$ contains the group $G\rtimes C$ defined in the statement of the theorem.
From Lemma \ref{lem3}, $Aut_K(\cN_{q,r})=G\rtimes H$, where $H$ is a cyclic group. From Schur-Zassenhaus theorem, $H$ contains $C$ up to conjugation.
By Lemma \ref{lem1}(e) the quotient curve $\cN_{q,r}/G$ is rational, and its function field is $K(x)$.
Hence the automorphism group $\bar H\cong H$ of $\cN_{q,r}/G$ induced by $H$ has exactly two fixed places and acts semiregularly elsewhere; see \cite[Hauptsatz 8.27]{Hup}.
Since $C\leq H$, the two places fixed by $\bar H$ are the place $\bar {\mathcal P}_\infty$ under $\mathcal P_\infty$ and the zero $\bar P_0$ of $x$.
Let $\Omega=\{P_{(0,0)}, P_{(0,a_2)}, \ldots, P_{(0,a_{q^{r-1}})}\}$ be the orbit of $G$ lying over $\bar P_0$, so that $Aut_K(\cN_{q,r})$ acts on $\Omega$; we denote by $P_{(0,0)}\in\Omega$ the zero of $y$, centered at the origin $(0,0)$.
The group $H$ has a fixed point in $\Omega$ by the Orbit-Stabilizer theorem, and $P_{(0,0)}$ is the only fixed place of $C$ other than $\mathcal P_{\infty}$; thus, $H$ fixes $P_{(0,0)}$.

Therefore, $H$ fixes the unique pole of $x$ and $y$, fixes the unique zero of $y$, and acts on the $q^{r-1}$ simple zeros of $x$. This implies that a generator $h$ of $H$ acts as $h(x)=\mu x$, $h(y)=\rho y$ for some $\mu,\rho\in K^*$.
By direct computation, $h$ is an automorphism of $\cN_{q,r}$ if and only if $\rho=\rho^q$ and $\mu^{\frac{q^r-1}{q-1}}=\rho$. Hence, $H=C$. 
\end{proof}

The following result generalizes Theorem \ref{AutNormTrace}.

\begin{theorem}\label{AutMonom}
Suppose that $m\not\equiv1\pmod{p^n}$ and $B(X)$ has just one root in $K$, so that Equation \eqref{EqSeparated} reads
$$b_m\left(X+\frac{b_{m-1}}{m b_m}\right)^m = A(Y).$$
Then one of the following two cases occurs.
\begin{itemize}
\item[(i)] $m$ divides $p^n+1$ and $A(Y)$ is $p^n$-linearized, that is, $A(Y)=a_n Y^{p^n}+a_0 Y$.
In this case, $\cC$ is projectively equivalent to the curve $\cQ_m$ with equation $X^m=Y^{p^n}+Y$ described in Case {\rm 1} of Lemma {\rm \ref{lem3}}.
\item[(ii)] $m$ does not divide $p^n+1$ or $A(Y)$ is not $p^n$-linearized.
Let $d=\gcd\left(j\geq1 : a_j\ne0\right)$ be the largest integer such that $A(Y)$ is $p^d$-linearized.
Then $Aut_K(\cC)$ has order $p^n m(p^d-1)$ and $Aut_K(\cC)=G\rtimes C$, where $G=\left\{(x,y)\mapsto(x,y+a)\mid A(a)=0\right\}$ and
$$ C=\left\{(x,y)\mapsto\left(bx+\frac{(b-1)b_{m-1}}{mb_m},b^m y\right)\mid b^{m(p^d-1)}=1\right\}. $$
\end{itemize}
\end{theorem}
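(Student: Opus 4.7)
First, I use Lemma \ref{lem3} to pin down when $Aut_K(\cC)$ fixes $\mathcal{P}_\infty$. The Hermitian exception forces $m=p^n+1$ and hence $m\equiv 1\pmod{p^n}$, violating the hypothesis, while the other exception requires $\cC$ to become $Y^{p^n}+Y=X^m$ after an affine substitution on $X,Y$; since such a substitution preserves both additivity and the set of exponents of the nonzero terms of $A$, the latter forces both $m\mid p^n+1$ and $A(Y)$ to be $p^n$-linearized. Conversely, when these two conditions hold, rescaling $Y\mapsto cY$ and $X\mapsto dX$ appropriately brings $\cC$ into the form $Y^{p^n}+Y=X^m$, and part (i) is immediate from Case 1 of Lemma \ref{lem3}. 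In part (ii) at least one condition fails, both exceptions are excluded, and $Aut_K(\cC)$ fixes $\mathcal{P}_\infty$.

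Working in part (ii), I translate $X\mapsto X-b_{m-1}/(mb_m)$ so that $B(X)=b_mX^m$. By Lemma \ref{lem2}(iii), the Sylow $p$-subgroup of $Aut_K(\cC)$ has order $p^n$ and coincides with $G$, which is thus normal; Schur--Zassenhaus gives $Aut_K(\cC)=G\rtimes H$ with $\gcd(|H|,p)=1$. Since $K(x,y)^G=K(x)$ (Lemma \ref{lem1}(iv)(e) together with the obvious inclusion $K(x)\subseteq K(x,y)^G$), the group $H$ acts on $K(x)$ fixing $\bar{\mathcal{P}}_\infty$, hence as affine transformations $x\mapsto\alpha x+\beta$; any subgroup of the one-dimensional affine group of order coprime to $p$ is cyclic, so $H$ is cyclic.

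To determine an arbitrary $h\in H$ explicitly, I invoke Riemann--Roch: with $v_{\mathcal{P}_\infty}(x)=-p^n$, $v_{\mathcal{P}_\infty}(y)=-m$, and $\{1,y,\ldots,y^{p^n-1}\}$ a $K(x)$-basis of $K(x,y)$, a direct computation shows that $\cL(p^n\mathcal{P}_\infty)\cap K(x)=K\oplus Kx$ and that every element of $\cL(m\mathcal{P}_\infty)$ has the form $P(x)+\rho y$ with $P\in K[x]$, $\deg P\le\lfloor m/p^n\rfloor$ and $\rho\in K$. Since $h$ fixes $\mathcal{P}_\infty$, this gives $h(x)=\alpha x+\beta$ and $h(y)=P(x)+\rho y$.

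Substituting into $A(h(y))=b_m h(x)^m$ and using $a_n y^{p^n}=b_m x^m-\sum_{j<n}a_j y^{p^j}$ to reduce modulo the minimal polynomial of $y$, then matching coefficients in the basis $\{1,y,y^p,\ldots,y^{p^{n-1}}\}$, yields on the one hand $\rho^{p^j}=\rho^{p^n}$ for every $j$ with $a_j\ne 0$ (which, since $a_0\ne 0$, forces $\rho\in\mathbb{F}_{p^d}^{*}$) and on the other hand the scalar identity $A(P(x))=b_m[(\alpha x+\beta)^m-\rho x^m]$. The main obstacle is to extract $\rho=\alpha^m$, $\beta=0$ and $P\in\ker A$ from this last identity; the key observation is that $\deg A(P(x))\in\{0\}\cup p^n\mathbb{Z}_{\ge 1}$, while neither $m$ (excluded since $p\nmid m$) nor $m-1$ (excluded since $m\not\equiv 1\pmod{p^n}$) belongs to this set, so the degree-$m$ and degree-$(m-1)$ coefficients $b_m(\alpha^m-\rho)$ and $b_m m\alpha^{m-1}\beta$ of the right-hand side must both vanish, giving $\rho=\alpha^m$ and $\beta=0$. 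The remaining identity $A(P(x))=0$ forces $P$ to be a constant $c\in\ker A$, so $h:(x,y)\mapsto(\alpha x,\alpha^m y+c)$ with $\alpha^{m(p^d-1)}=1$; the translation $c$ is absorbed into $G$, exposing a cyclic complement of order $m(p^d-1)$, and undoing the initial translation on $X$ recovers the group $C$ stated in the theorem.
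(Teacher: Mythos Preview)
Your proof is correct and reaches the same conclusion, but the core step---pinning down the cyclic complement $H$---is carried out by a genuinely different mechanism than in the paper.

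The paper first verifies directly that $C\subseteq Aut_K(\cC)$ and then uses $C$ as a scaffold: knowing $C\le H$, it identifies the two fixed places of $\bar H$ on the rational quotient $\cC/G$ (the pole and the zero of $x+\tfrac{b_{m-1}}{mb_m}$), and then uses an Orbit--Stabilizer argument together with the fixed places of $C$ to force $H$ to fix the zero of $y$; this yields $h(y)=\rho y$ immediately, with no polynomial term in $x$ to eliminate. Your route instead allows the a priori shape $h(y)=P(x)+\rho y$ coming from $\cL(m\mathcal P_\infty)$ and kills both $P$ and the translation $\beta$ by the degree argument $\deg A(P(x))\in\{0\}\cup p^n\mathbb Z_{\ge1}$, exploiting $p\nmid m$ and $m\not\equiv 1\pmod{p^n}$ to rule out degrees $m$ and $m-1$ on the right-hand side. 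This is essentially the technique the paper deploys later in the proof of Theorem~\ref{MonomAut}, so you have in effect imported that argument back into the present theorem; it has the virtue of making the role of the hypothesis $m\not\equiv 1\pmod{p^n}$ completely transparent, whereas in the paper's proof that hypothesis is used only through Lemma~\ref{lem2}(iii). One small omission: you derive that every $h\in H$ lies (modulo $G$) in the cyclic group $\{(x,y)\mapsto(\alpha x,\alpha^m y):\alpha^{m(p^d-1)}=1\}$, which shows $|H|\le m(p^d-1)$, but to conclude equality you still need the routine forward check that every such map is indeed an automorphism of $\cC$; the paper records this as ``by direct checking, $S$ contains $G\rtimes C$''.
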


\begin{proof}
Let $S$ be the stabilizer of $\mathcal P_\infty$ in $Aut_K(\cC)$.
By direct checking, $S$ contains the semidirect product $G\rtimes C$.
By Lemma \ref{lem2}, $S=G\rtimes H$, where $H$ is a cyclic group of order coprime to $p$. By Schur-Zassenhaus theorem, $H$ contains $C$ up to conjugation.
Arguing as in the proof of Theorem \ref{AutNormTrace}, $\cC/G$ is rational, and any nontrivial of the induced automorphism group $\bar H\cong H\leq Aut_K(\cC/G)$ fixes the pole $\bar{\mathcal P}_\infty$ of $x$ and the zero $\bar P$ of $x+\frac{b_{m-1}}{m b_m}$.
Hence $H$ acts on the $p^n$ distinct places of $\cC$ lying over $\bar P$, and $H$ fixes one of them by the Orbit-Stabilizer theorem. The only fixed place of $C$ different from $\mathcal P_{\infty}$ is the unique zero $P$ of $y$, centered at the affine point $(\frac{-b_{m-1}}{m b_m},0)$; thus, $H$ fixes $P$.
Let $h$ be a generator of $H$. We have shown that $h$ fixes the zero and the pole of $y$, which implies $h(y)=\rho y$ for some $\rho\in K$. Also, $h$ fixes the pole and acts on the simple zeros of $x+\frac{b_{m-1}}{m b_m}$; this implies $h(x+\frac{b_{m-1}}{m b_m}) = \mu (x+\frac{b_{m-1}}{m b_m})$ for some $\mu\in K$, that is, $h(x)=\mu x + \frac{(\mu-1)b_{m-1}}{m b_m}$.
By direct checking, $h$ normalizes $G$ if and only if $A(\mu a)=0$ for all $a\in K$ satisfying $A(a)=0$. As $A(Y)$ is separable, this happens if and only if $A(\mu Y)=A(Y)$. This is equivalent to $\mu\in\mathbb F_{p^d}^*$, with $d$ defined as in the statement of this theorem.
Then, in order for $h$ to be an automorphism of $\cC$, we have $\rho^m=\mu$.
We have shown that $S=G\rtimes C$.

From Lemma \ref{lem3}, either $Aut_K(\cC)=G\rtimes C$ and Case {\it (ii)} holds, or $\cC$ is isomorphic to the curve $\mathcal Q_s: X^s=Y^{\bar q}+Y$ with $s\mid(\bar q+1)$, $s<\bar q$.
Suppose that $\cC\cong \cQ_s$. By Lemma \ref{lem2} the Sylow $p$-subgroups of $Aut_K(\cC)$ and $Aut_K(\cQ_s)$ have size $p^n$ and $\bar q$ respectively, so that $\bar q=p^n$; as $g(\cC)=g(\cQ_s)$, we have $s=m$.
The normalizer in $Aut_K(\cQ_m)$ of a Sylow $p$-subgroup contains a cyclic group of order $p^n-1$, by Lemma \ref{lem3}(b). Hence, the same holds in $Aut_K(\cC)$ and $d=n$;
this means that $\cC$ has equation
\begin{equation}\label{EqTuttoLin}
b_m\left(X+\frac{b_{m-1}}{m b_m}\right)^m = a_n Y^{p^n} + a_0 Y.
\end{equation}
Conversely, if $\cC$ is defined by Equation \eqref{EqTuttoLin}, then $\cC$ is isomorphic to $\cQ_m$. In fact, define $\varphi:(x,y)\mapsto(x^\prime,y^\prime):=(\gamma x,\delta a_0 y)$ with $\delta^{p^n-1}=a b^{-p^n}$ and $\gamma^m=\delta$. Then $K(x,y)=K(x^\prime,y^\prime)$ and $\varphi(\cC)=\cQ_m$.
Now the proof is complete.
\end{proof}

Next result provides a converse to Theorem \ref{AutMonom} and extends \cite[Theorem 12.8]{HKT}.

\begin{theorem}\label{MonomAut}
Let $d=\gcd\left(j\geq1 : a_j\ne0\right)$ be the largest integer such that $A(Y)$ is $p^d$-linearized.
If $|Aut_K(\cC)_{P_\infty}|/|Aut_K(\cC)_{P_\infty}^{(1)}|\geq m(p^d-1)$, then $B(X)$ has a unique root in $K$, that is,
$$ B(X)=b_m\left(X+\frac{b_{m-1}}{mb_m}\right)^m. $$
\end{theorem}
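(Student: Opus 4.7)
The strategy is to convert the hypothesis on $|H|:=|Aut_K(\cC)_{\mathcal P_\infty}|/|Aut_K(\cC)_{\mathcal P_\infty}^{(1)}|$ into a polynomial identity on $B$ and then exploit the largeness of $H$ to kill every intermediate coefficient of $B$ after a suitable shift. Decompose $S:=Aut_K(\cC)_{\mathcal P_\infty}=G\rtimes H$ via Schur--Zassenhaus, where $G=S^{(1)}$ is the Sylow $p$-subgroup of order $p^n$ and $H$ is a tame complement. Because $H$ normalises $G$, it acts on $K(x,y)^G=K(x)$ (Lemma \ref{lem1}(iv)(e)); since $\bar{\mathcal P}_\infty$ is fixed, each $h\in H$ acts as $x\mapsto\mu_h x+\nu_h$. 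The assignment $h\mapsto\mu_h$ embeds $H$ faithfully into $K^*$ (an element with $\mu_h=1$ would fix $K(x)$ pointwise and hence lie in $G\cap H=\{1\}$), so the image is cyclic of order $|H|$.

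Next, using the Weierstrass semigroup $\langle p^n,m\rangle$ at $\mathcal P_\infty$, in the principal case $m<p^n$ we have $L(m\mathcal P_\infty)=K\oplus Ky$, so $h(y)=\alpha_h y+\beta_h$ with $\alpha_h,\beta_h\in K$ (the case $m>p^n$, handled in parallel, permits extra monomials $x^j$ in $h(y)$). Normalisation of $G$ by $h$ forces $\alpha_h\in\mathbb F_{p^d}^*$ and $A(\alpha_h y)=\alpha_h A(y)$, as in the proof of Theorem \ref{AutMonom}, so the equation $A(h(y))=B(h(x))$ becomes the polynomial identity $\alpha_h B(x)+A(\beta_h)=B(\mu_h x+\nu_h)$. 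Matching the $x^m$ coefficient yields $\alpha_h=\mu_h^m$, and the $x^{m-1}$ coefficient yields $\nu_h=(\mu_h-1)b_{m-1}/(mb_m)$, so every non-trivial $h\in H$ fixes the single affine point $x_0:=-b_{m-1}/(mb_m)$. After the shift $X\mapsto X+b_{m-1}/(mb_m)$ I may assume $\nu_h=0$ and $b_{m-1}=0$, and $H$ acts by pure scaling $x\mapsto\mu_h x$; the identity becomes $\alpha_h\tilde B(x)+A(\beta_h)=\tilde B(\mu_h x)$.

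Comparing $x^k$-coefficients for $1\le k\le m-1$ gives $\tilde b_k(\mu_h^k-\mu_h^m)=0$: if $\tilde b_k\ne 0$, then $\mu_h^{m-k}=1$ for every $h\in H$, so $|H|\mid m-k$, which contradicts $|H|\ge m(p^d-1)\ge m>m-k$. Hence $\tilde b_k=0$ for $1\le k\le m-1$. The constant-term equation $A(\beta_h)=(1-\alpha_h)\tilde b_0$ is automatically solvable over the algebraically closed $K$, and a further substitution $y\mapsto y+\beta_0$ with $A(\beta_0)=\tilde b_0$ absorbs the residual constant, presenting $\cC$ in the claimed form $B(X)=b_m(X+b_{m-1}/(mb_m))^m$. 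The main obstacle lies in the case $m>p^n$: the basis of $L(m\mathcal P_\infty)$ is larger and $h(y)$ acquires extra terms $\gamma_{h,j}x^j$, so the polynomial identity must be sorted by pole order at $\mathcal P_\infty$ and analysed coefficient-by-coefficient; the same scaling rigidity and pigeonhole on $m-k$ still isolate each intermediate $\tilde b_k$ and force it to vanish, but the bookkeeping is more delicate.
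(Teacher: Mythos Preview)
Your argument follows the same route as the paper: split $S=G\rtimes H$, let $H$ act on $K(x)=K(\cC/G)$ by affine maps, find the common affine fixed point, shift it to the origin, and use the size of $H$ to force the intermediate coefficients of $B$ to vanish. Two tactical points differ, and one of them is where your $m>p^n$ case genuinely stalls.

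For the fixed point you read off $\nu_h=(\mu_h-1)b_{m-1}/(mb_m)$ from the $X^{m-1}$ coefficient of $\alpha_h B(X)+A(\beta_h)=B(\mu_h X+\nu_h)$. That is valid only when $h(y)-\alpha_h y$ is a constant, i.e.\ when $m<p^n$; once $h(y)=\alpha_h y+Q_h(x)$ with $\deg Q_h\ge1$, the polynomial $A(Q_h(X))$ can contribute to the $X^{m-1}$ term and your formula for $\nu_h$ is no longer available. The paper sidesteps this by arguing abstractly: a cyclic group of order prime to $p$ acting on the rational function field $K(x)$ and fixing the pole of $x$ has exactly one further fixed place and is semiregular elsewhere (\cite[Hauptsatz~8.27]{Hup}); this locates the shift $x'=x+u$ uniformly in both regimes $m\lessgtr p^n$.

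For the final step you compare each $X^k$ coefficient and use $|H|\mid m-k$; the paper instead passes to the single element $\beta:=\alpha^{p^d-1}$ of order exactly $m$, for which the $y$-multiplier is $1$ so $\beta(y)=y+\tilde Q(x)$, and first shows $\tilde Q=0$ (a nonzero constant would force $p\mid{\rm ord}(\beta)$; positive degree would produce a term of degree $p^n\deg\tilde Q$ absent on the other side). Only after $\tilde Q=0$ does the scaling identity $\tilde B(b^{p^d-1}X')=\tilde B(X')$ with $b^{p^d-1}$ a primitive $m$-th root of unity kill the intermediate coefficients. In your sketch for $m>p^n$ the ``same pigeonhole on $m-k$'' does \emph{not} apply directly, because the coefficient identity reads $\tilde b_k(\mu_h^k-\mu_h^m)=c_k^{(h)}$ with $c_k^{(h)}$ the $X^k$ coefficient of $A(Q_h)$, and this right-hand side need not vanish; the reduction to $\beta$ is precisely what removes this obstruction.

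One minor slip: ``$\mu_h=1$ implies $h$ fixes $K(x)$ pointwise'' is false as stated (a translation $x\mapsto x+\nu_h$ has $\mu_h=1$); the correct justification is that such a nontrivial translation has order $p$, impossible in the tame complement $H$.
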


\begin{proof}
Let $S$ be the stabilizer of $\mathcal P_\infty$ in $Aut_K(\cC)$, $H$ be a cyclic complement of $S^{(1)}$ in $S$, and $\alpha$ be a generator of $H$.
From Lemma \ref{lem2}, $G=\{(x,y)\mapsto(x,y+a)\mid A(a)=0\}$ is normal in $S$. Hence, $\alpha$ is an automorphism of the quotient curve $\cC/G$; by Lemma \ref{lem1}(e), $\cC/G$ is rational with function field $K(x)$.
From \cite[Haptsatz 8.27]{Hup}, $\alpha$ has two fixed places in $K(x)$ and acts semiregularly elsewhere. One of the two places is the pole of $x$, lying under $\mathcal{P}_\infty$; the other place is the zero of $x^\prime:=x+u$ for some $u\in K$. Thus $\alpha(x^\prime)=bx^\prime$, for some $b\in K^*$ of order $ord(b)=ord(\alpha)$.
Since $\alpha$ fixes the unique pole $\mathcal{P}_\infty$ of $y$ and the Weierstrass semigroup $H(\mathcal{P}_\infty)$ is generated by $-v_{\mathcal{P}_\infty}(x)=p^n$ and $-v_{\mathcal{P}_\infty}(x)=m$, we have that $\alpha(y)=ay+Q(x)$, where $a\in K^*$ and $Q(X)$ is a polynomial satisfying either $Q(X)=0$ or $\deg(Q(X))\cdot p^n<m$.

Let $B^\prime(X^\prime):=B(X)=B(X^\prime-u)$ and $Q^\prime(X^\prime):=Q(X)=Q(X^\prime-u)$.
Since $\alpha$ is an automorphism of $\cC$, the polynomial $A(aY+Q^\prime(X^\prime))-B^\prime(b X^\prime)$ is a multiple of the polynomial $A(Y)-B^\prime(X^\prime)$, say
\begin{equation}\label{Transf}
A(aY+Q^\prime(X^\prime))-B^\prime(b X^\prime) = k_1(A(Y)-B^\prime(X^\prime))
\end{equation}
with $k_1\in K^*$.
As $A$ is a separable polynomial, Equation \eqref{Transf} implies $A(aY)=kA(Y)$ and hence $k_1=a^{p^j}$ for any $j$ such that $a_j\ne0$; thus, $k_1=a$ and $a^{p^d-1}=1$. Equation \eqref{Transf} also implies $B^\prime(bX^\prime)=B^\prime(X^\prime)+A(Q^\prime(X^\prime))$ and hence $k_1=b^m$ from the comparison of monomials ${X^\prime}^m$; thus, $(b^m)^{p^d-1}=1$ which yields $|H|=m(p^d-1)$.

Let $\beta:=\alpha^{p^d-1}$, which has order $m$ acts as $\beta(x^\prime)=b^{p^d-1}x^\prime$, $\beta(y)=y+Q^\prime(b^{p^d-2}x^\prime)$.
As $\beta\in Aut_K(\cC)$, we have
$$
A(Y+Q^\prime(b^{p^d-2}X^\prime))-B^\prime(b^{p^d-1} X^\prime) = k_2(A(Y)-B^\prime(X^\prime))
$$
with $k_2\in K^*$. Then $k_2=1$ and
\begin{equation}\label{Transf2}
B^\prime(b^{p^d-1} X^\prime)=B^\prime(X^\prime)+A(Q^\prime(b^{p^d-2}X^\prime)).
\end{equation}
We want to show that $\beta(y)=y$.
Suppose by contradiction that $Q^\prime(b^{p^d-2}X^\prime)\ne0$.
If $Q^\prime(b^{p^d-2}X^\prime)$ is a nonzero constant, then the order of $\beta$ is a multiple of $p$, a contradiction to $ord(\beta)=m$.
If $\deg(Q^\prime(b^{p^d-2}X^\prime))>1$, then in Equation \eqref{Transf2} the right-hand side has a non-vanishing term of degree $p^n\cdot\deg(Q^\prime(X^\prime))$ while the left-hand side has not, a contradiction.
Therefore, $\beta(x^\prime)=b^{p^d-1}x^\prime$ and $\beta(y)=y$, with $ord(b)=m(p^d-1)$. Since $\beta$ is an automorphism of $\cC$, $B^\prime(X^\prime)=\lambda {X^\prime}^m$ for some $\lambda\in K^*$, that is, $B(X)=b_m\left(X+\frac{b_{m-1}}{mb_m}\right)^m$. 
\end{proof}

Even if $B(X)$ is not a monomial, the argument of the proof of Theorem \ref{MonomAut} shows the following result.

\begin{proposition}\label{Condiz}
Let $Aut_K(\cC)_{P_\infty}=Aut_K(\cC)_{P_\infty}^{(1)}\rtimes H$ with $H=\langle\alpha\rangle$, and let $d=\gcd(j\geq1:a_j\ne0)$ be the largest integer such that $A(Y)$ is $p^d$-linearized.
Then $\alpha(x)=bx+c$ for some $b,c\in K$, and $\alpha(B(x))=a B(x)$ for some $a\in\mathbb F_{p^d}^*$.
\end{proposition}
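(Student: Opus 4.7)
The plan is to replay the first half of the proof of Theorem \ref{MonomAut} verbatim, since those steps never exploit that $B(X)$ is a monomial. First I would invoke Lemma \ref{lem2} to note that the elementary abelian subgroup $G$ of Lemma \ref{lem1}(iv)(b) sits inside $Aut_K(\cC)_{\mathcal P_\infty}^{(1)}$ and is normal in $S:=Aut_K(\cC)_{\mathcal P_\infty}$. Hence $\alpha$ descends to an automorphism $\bar\alpha$ of the quotient curve $\cC/G$, whose function field is the rational $K(x)$ by Lemma \ref{lem1}(iv)(e). Since the order of $\alpha$ is coprime to $p$, Hauptsatz 8.27 of Huppert forces $\bar\alpha$ to fix exactly two places of $K(x)$, namely the pole of $x$ (under $\mathcal P_\infty$) and the zero of some $x+u$ with $u\in K$. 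Consequently $\alpha(x+u)=b(x+u)$ for some $b\in K^*$, giving $\alpha(x)=bx+c$ with $c=(b-1)u\in K$, which establishes the first claim.

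Next I would transfer the pole-order argument from Theorem \ref{MonomAut}: $\alpha$ fixes the unique pole $\mathcal P_\infty$ of $y$, and the Weierstrass semigroup at $\mathcal P_\infty$ is generated by $p^n=-v_{\mathcal P_\infty}(x)$ and $m=-v_{\mathcal P_\infty}(y)$, so $\alpha(y)=ay+Q(x)$ with $a\in K^*$ and $Q\in K[X]$ satisfying either $Q=0$ or $p^n\deg Q<m$. Since $\alpha$ preserves the defining polynomial of $\cC$, there exists $k\in K^*$ with
$$ A(aY+Q(X))-B(bX+c) \;=\; k\bigl(A(Y)-B(X)\bigr) \qquad \text{in } K[X,Y]. $$
Additivity of $A$ (so $A(aY+Q(X))=A(aY)+A(Q(X))$) splits this identity into $A(aY)=kA(Y)$ on the $Y$-dependent part and $B(bX+c)=aB(X)+A(Q(X))$ on the $X$-dependent part.

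Matching coefficients of $Y^{p^j}$ in the first identity yields $a^{p^j}=k$ for every $j$ with $a_j\ne 0$: the index $j=0$ gives $k=a$, and the conditions $a^{p^j-1}=1$ for $j\ge 1$ with $a_j\ne 0$ intersect to $a\in\bigcap_j \mathbb F_{p^j}^*=\mathbb F_{p^d}^*$ by the very definition of $d$. The complementary identity $B(bX+c)=aB(X)+A(Q(X))$ then reads $\alpha(B(x))=B(bx+c)=aB(x)$, establishing the second claim exactly as in the corresponding step of Theorem \ref{MonomAut}. The main point requiring care is the systematic additive bookkeeping together with the identification of a single scalar $k=a$ compatible with \emph{every} nonzero $a_j$; once this is done, no further information on $B(X)$ (such as being a monomial) is needed, which is why Proposition \ref{Condiz} emerges as a direct byproduct of the arguments already used in the proof of Theorem \ref{MonomAut}.
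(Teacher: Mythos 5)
Your argument tracks the paper's route exactly: the paper itself offers nothing beyond the remark that ``the argument of the proof of Theorem \ref{MonomAut} shows the following result'', and everything you write up to the identity $B(bX+c)=aB(X)+A(Q(X))$, with $k=a\in\mathbb F_{p^d}^*$, is sound (you even silently repair the missing factor $k_1$ in front of $B'(X')$ in the paper's displayed version of that identity). In particular the first claim, $\alpha(x)=bx+c$, is correctly established.

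The gap is in your last sentence: the identity $B(bX+c)=aB(X)+A(Q(X))$ ``reads'' $\alpha(B(x))=aB(x)$ only if $A(Q(X))=0$, and nothing proved so far forces this. In the proof of Theorem \ref{MonomAut} the analogous term is killed only for $\beta=\alpha^{p^d-1}$, where $a=1$: there a nonzero constant $Q$ would make the order of $\beta$ divisible by $p$, and a nonconstant $Q$ is excluded by a degree comparison. Neither argument applies to $\alpha$ itself when $a\ne1$, and the term can genuinely survive. For example, take $\cC:y^3-y=x^5+1$ over $K=\overline{\mathbb F}_3$ (so $p^n=3$, $m=5$, $d=1$, and $m\not\equiv1\pmod{p^n}$): the map $\alpha:(x,y)\mapsto(bx,-y+q_0)$ with $b$ of order $10$, $b^5=-1$ and $q_0^3-q_0=-1$ is an automorphism fixing $\mathcal P_\infty$ and generates the cyclic complement $H$ of order $m(p^d-1)=10$; yet $\alpha(B(x))=B(bx)=-x^5+1=-B(x)+2$, which is not $aB(x)$ for any $a\in\mathbb F_3^*$. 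So what your argument (and the paper's) actually yields is only $\alpha(B(x))=aB(x)+A(Q(x))$ with $Q$ constant or $p^n\deg Q<m$; an extra step is needed to dispose of $A(Q(x))$ --- for instance a preliminary normalisation of the constant term of $B$ (replace $y$ by $y-t$ with $A(t)=b_0$) together with an argument that $Q$ then vanishes --- before the stated conclusion, and the Remark that relies on it, can be used.
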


\begin{remark}
Once that $B(X)$ is explicitely given, Proposition {\rm \ref{Condiz}} provides a method to find $H$.
In fact, $H$ has one fixed affine place in $K(x)$ and acts semiregularly on the other affine places; also, $H$ acts on the zeros of $B(x)$ with the same multiplicity.
For instance:
\begin{itemize}
\item If $B(X)$ has more than one root, but only one root with fixed multiplicity $M>1$, then $|H|$ divides either $M$ or $M-1$.
\item If $B(X)$ has more than one root, and all the root have the same multiplicity $M>1$, then $H$ is trivial and $Aut_K(\cC)$ is a $p$-group of order $p^n$.
\end{itemize}
\end{remark}

\section{Multi point AG codes on the norm-trace curves}


Let $\ell,r\in\mathbb N$ with $r\geq3$, and let $\cN_{q,r}$ be the norm-trace curve as defined in Section \ref{Sec:Aut}. Let $\Omega=\{P_{(0,y_1)},\ldots,P_{(0,y_{q^{r-1}})}\}$ be the set of the $q^{r-1}$ $\mathbb F_{q^r}$-rational places of $\cN_{q,r}$ which are the zeros of $x$; the place $P_{(a,b)}$ is centered at the affine point $(a,b)$ of $\cN_{q,r}$. Let $\Theta:=\cN_{q,r}(\mathbb F_{q^r})\setminus\Omega$; note that $\Theta$ contains the place at infinity $P_\infty$.
As pointed out in the proof of Theorem \ref{AutNormTrace}, the principal divisors of the coordinate functions are the following:
\begin{itemize}
\item $(x)=\sum_{P\in \Omega}P - q^{r-1} P_\infty$ ;
\item $(y)=\frac{q^r-1}{q-1} P_{(0,0)} - \frac{q^r-1}{q-1} P_\infty$ .
\end{itemize}
Define the $\mathbb F_{q^r}$-divisors
$$G:=\sum_{P\in\Omega}\ell P\quad\textrm{and}\quad D:=\sum_{P\in\Theta}P.$$
Since $|\cN_{q,r}(\mathbb F_{q^r})|=q^{2r-1}+1$ (see \cite[Lemma 2]{G2003}), $G$ and $D$ have degree $\ell q^{r-1}$ and $q^{2r-1}+1-q^{r-1}$, respectively.
Denote by $C:=C_{\mathcal L}(D,G)$ the associated functional AG code over $\mathbb F_{q^r}$ having length $n= q^{2r-1}+1-q^{r-1}$, dimension $k$, and minimum distance $d$. The designed minimum distance is
$$ d^*=n-\deg (G) = q^{2r-1}+1-(\ell+1)q^{r-1}. $$
The designed minimum distance is attained by $C$.

\begin{proposition}
Whenever $d^*>0$, $C$ attains the designed minimum distance $d^*$.
\end{proposition}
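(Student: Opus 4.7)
The plan is to exhibit an explicit element of $\mathcal L(G)$ whose evaluation vector has weight exactly $d^{*}$. The hypothesis $d^{*}=q^{2r-1}+1-(\ell+1)q^{r-1}>0$ is equivalent to $(\ell+1)q^{r-1}<q^{2r-1}+1$, which (using $r\geq 3$ and $q\geq 2$) forces $\ell\leq q^{r}-1=|\mathbb F_{q^r}^{*}|$. Hence I can choose pairwise distinct elements $a_{1},\ldots,a_{\ell}\in\mathbb F_{q^{r}}^{*}$ and consider the rational function
$$ f\;=\;\frac{\prod_{i=1}^{\ell}(x-a_{i})}{x^{\ell}}\in \mathbb F_{q^{r}}(\mathcal N_{q,r}). $$

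Next I would compute the principal divisor of each factor $x-a_{i}$. Since $a_{i}\in\mathbb F_{q^{r}}^{*}$, the norm $a_{i}^{(q^{r}-1)/(q-1)}=N_{q^{r}\mid q}(a_{i})$ lies in $\mathbb F_{q}$; by surjectivity of the $\mathbb F_{q}$-linear trace map $\mathrm{Tr}_{q^{r}\mid q}\colon\mathbb F_{q^{r}}\to\mathbb F_{q}$, the equation $\mathrm{Tr}_{q^{r}\mid q}(y)=N_{q^{r}\mid q}(a_{i})$ has exactly $q^{r-1}$ solutions $y\in\mathbb F_{q^{r}}$. Hence the function $x-a_{i}$ has $q^{r-1}$ distinct simple zeros at the $\mathbb F_{q^{r}}$-rational places $P_{(a_{i},y)}$, and (since it shares with $x$ the same pole of order $q^{r-1}$ at $P_{\infty}$) satisfies $(x-a_{i})=\sum_{y}P_{(a_{i},y)}-q^{r-1}P_{\infty}$. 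Summing over $i$ and subtracting $\ell(x)=\ell\sum_{P\in\Omega}P-\ell q^{r-1}P_{\infty}$ gives
$$ (f)+G \;=\; \sum_{i=1}^{\ell}\sum_{y\,:\,\mathrm{Tr}(y)=N(a_{i})}P_{(a_{i},y)} \;\geq\; 0, $$
so $f\in\mathcal L(G)$.

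Finally I would count the zero coordinates of $e_{D}(f)$. The zeros of $f$ on $\mathcal N_{q,r}$ are exactly the $\ell q^{r-1}$ distinct places $P_{(a_{i},y)}$ appearing above; each of them lies in $\Theta$ because $a_{i}\neq 0$ (hence they are not in $\Omega$) and they are $\mathbb F_{q^{r}}$-rational but different from $P_{\infty}$. At $P_{\infty}$ the function $f$ is the ratio of two monic polynomials in $x$ of the same degree $\ell$, so $f(P_{\infty})=1\neq 0$; at every other $P\in\Theta$ one has $x(P)\notin\{0,a_{1},\ldots,a_{\ell}\}$, so again $f(P)\neq 0$. Consequently the weight of $e_{D}(f)$ equals $|\Theta|-\ell q^{r-1}=n-\deg(G)=d^{*}$, and combined with the Goppa lower bound $d\geq d^{*}$ this gives $d=d^{*}$.

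\textbf{Main obstacle.} The only nontrivial point is the divisor computation for $x-a_{i}$, that is, the claim that $x=a_{i}$ has exactly $q^{r-1}$ distinct $\mathbb F_{q^{r}}$-rational solutions on $\mathcal N_{q,r}$. This follows from surjectivity of $\mathrm{Tr}_{q^{r}\mid q}$, but must be done carefully to be sure the zeros are simple (which follows from their count matching the degree $q^{r-1}$ of $x$); the rest is routine divisor bookkeeping.
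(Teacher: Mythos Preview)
Your proposal is correct and follows essentially the same approach as the paper: both construct the function $f=\prod_{i=1}^{\ell}(x-a_i)/x^{\ell}$ with $a_i\in\mathbb F_{q^r}^{*}$ distinct, verify $f\in\mathcal L(G)$, and count its zeros on $\Theta$ via the norm--trace structure. Your write-up is in fact more careful than the paper's, explicitly checking that $f(P_{\infty})=1\ne0$ and that the zeros are simple, whereas the paper simply asserts that $f$ has exactly $\ell q^{r-1}$ rational zeros.
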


\begin{proof}
By direct computation, the assumption $d^*>0$ is equivalent to $\ell<q^r$.
Take $\ell$ distinct elements $c_1,\ldots,c_\ell\in \mathbb F_{q^r}^*$ and let
$$f:=\prod_{i=1}^{\ell} \left(\frac{x-c_i}{x}\right).$$
The pole divisor of $f$ is exactly $G$, so that $f\in\cL(G)$. By the properties of the norm and trace maps, $f$ has exactly $\ell q^{r-1}$ distinct $\mathbb F_{q^r}$-rational zeros. Thus, the weigth of $e_D(f)$ is $n-\ell q^{r-1}=d^*$.
\end{proof}

We compute the dimension of $C$.

\begin{proposition}
If $\frac{q^r-1}{q-1}-2\leq\ell\leq q^r-1$, then
$$k=\ell q^{r-1}+1-\frac{1}{2}\left(\frac{q^r-1}{q-1}-1\right)\left(q^{r-1}-1\right).$$
\end{proposition}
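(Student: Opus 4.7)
The plan is to recognize the asserted formula as the Riemann--Roch expression $\ell(G)+1-g$ and then verify the two inequalities that let us apply the standard facts already recorded at the end of Section~2: if $\deg(G)>2g-2$ then $\ell(G)=\deg(G)+1-g$, and if $n>\deg(G)$ then the evaluation map $e_D$ is injective so $k=\ell(G)$. To keep the algebra light I would set $M:=\frac{q^r-1}{q-1}$ and $Q:=q^{r-1}$. By Lemma~\ref{lem1}(iii) applied with $p^n=Q$ and $m=M$ the genus of $\mathcal{N}_{q,r}$ is $g=\tfrac12(M-1)(Q-1)$, which matches the quantity subtracted in the statement, so the target identity is exactly $k=\ell Q+1-g=\deg(G)+1-g$.

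Next I would verify $\deg(G)>2g-2$ using the lower bound $\ell\geq M-2$. A direct substitution gives
$$\deg(G)-(2g-2)=\ell Q-(M-1)(Q-1)+2\geq(M-2)Q-(M-1)(Q-1)+2=M-Q+1.$$
Since $M-Q=1+q+q^2+\cdots+q^{r-2}\geq1$, this difference is strictly positive, so Riemann--Roch yields $\ell(G)=\deg(G)+1-g$. I would then verify $\deg(G)<n$ using the upper bound $\ell\leq q^r-1$. Because $|\mathcal{N}_{q,r}(\mathbb{F}_{q^r})|=q^{2r-1}+1$ (cited already from \cite{G2003}) and $|\Omega|=q^{r-1}=Q$, we have $n=Q(q^r-1)+1$ and
$$n-\deg(G)=Q(q^r-1-\ell)+1\geq 1>0.$$
Therefore $e_D$ is injective, giving $k=\ell(G)=\deg(G)+1-g=\ell Q+1-\tfrac12(M-1)(Q-1)$, which is the desired formula.

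There is no real obstacle to overcome; the only point that deserves comment is that the precise interval $\frac{q^r-1}{q-1}-2\leq\ell\leq q^r-1$ is designed exactly so that both the Riemann--Roch bound $\deg(G)>2g-2$ and the injectivity bound $\deg(G)<n$ hold simultaneously, and the computation above shows each endpoint is tight for one of the two inequalities.
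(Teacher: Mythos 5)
Your proof is correct and follows exactly the same route as the paper, which simply asserts $n>\deg(G)>2g-2$ and invokes Riemann--Roch; you merely make explicit the verification of those two inequalities from the bounds on $\ell$. All the computations (the genus value, $\deg(G)-(2g-2)\geq M-Q+1>0$, and $n-\deg(G)\geq 1$) check out.
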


\begin{proof}
Since $n>\deg(G)>2g-2$, $k=\deg(G)+1-g$ by the Riemann-Roch Theorem.
\end{proof}

\begin{proposition} \label{monomiallyeq}
The code $C$ is monomially equivalent to the extended one-point code $C_{ext}(D,G^\prime)$, where $G^\prime=\ell q^{r-1}\mathcal{P}_{\infty}$.
\end{proposition}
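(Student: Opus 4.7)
The plan is to exhibit an explicit linear equivalence between $G$ and $G'$, use it to identify the two Riemann-Roch spaces, and then check that the evaluation map of $C$ coincides, up to a nonzero diagonal scaling, with the extended evaluation map defining $C_{ext}(D,G')$.

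First I would read off from the principal divisor of the coordinate function $x$ that $(x^\ell)=\ell(x)=\sum_{P\in\Omega}\ell P-\ell q^{r-1}P_\infty=G-G'$, so $G\sim G'$ and the $K$-linear map $\varphi:\cL(G')\to\cL(G)$, $f\mapsto x^{-\ell}f$, is an isomorphism. This already implies that $\dim C=\dim C_{ext}(D,G')$, but I need to match the evaluations coordinate by coordinate.

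Next I would enumerate the places in $\mathrm{supp}(D)=\Theta$ as $P_{(a_1,b_1)},\ldots,P_{(a_{n-1},b_{n-1})},P_\infty$, noting that all $a_j\in\mathbb F_{q^r}^*$ since the zeros of $x$ are precisely $\Omega$, which is disjoint from $\Theta$. For each finite place $P_{(a_j,b_j)}$ the coefficient $n_{P_{(a_j,b_j)}}$ in $G'$ is $0$, so the $j$-th coordinate of $e'_D(f)$ is simply $f(P_{(a_j,b_j)})$, whereas the $j$-th coordinate of $e_D(\varphi(f))$ is $(x^{-\ell}f)(P_{(a_j,b_j)})=a_j^{-\ell}f(P_{(a_j,b_j)})$, which differs by the nonzero scalar $a_j^{-\ell}\in\mathbb F_{q^r}^*$.

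For the coordinate at $P_\infty$, pick any local parameter $t$ at $P_\infty$; since $v_{P_\infty}(x)=-q^{r-1}$ we may write $x=u\, t^{-q^{r-1}}$ with $v_{P_\infty}(u)=0$, so $u(P_\infty)\in K^*$. Then
$$ (x^{-\ell}f)(P_\infty)=u(P_\infty)^{-\ell}\cdot\bigl(t^{\ell q^{r-1}}f\bigr)(P_\infty), $$
which is the $P_\infty$-coordinate of $e'_D(f)$ rescaled by the nonzero constant $u(P_\infty)^{-\ell}$. Collecting the scalars in a diagonal matrix with entries in $\mathbb F_{q^r}^*$, one obtains a monomial equivalence $C\to C_{ext}(D,G')$, as required. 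The only mildly delicate point is the local-parameter computation at $P_\infty$; everything else is a direct consequence of the identity $(x^\ell)=G-G'$.
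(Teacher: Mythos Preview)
Your proof is correct and follows essentially the same approach as the paper's: both use the identity $(x^\ell)=G-G'$ to identify $\cL(G)$ with $\cL(G')$ via multiplication by $x^{\pm\ell}$, and then observe that the two evaluation maps differ by a nonzero diagonal scaling, with the only care required at the coordinate $\mathcal P_\infty$. Your treatment of that coordinate (writing $x=u\,t^{-q^{r-1}}$ with $u$ a unit at $\mathcal P_\infty$) is in fact a bit more explicit than the paper's; just make sure to choose $t$ to be $\mathbb F_{q^r}$-rational so that $u(\mathcal P_\infty)^{-\ell}\in\mathbb F_{q^r}^*$, as you implicitly assume when asserting the diagonal matrix has entries in $\mathbb F_{q^r}^*$.
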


\begin{proof}
We have $G=G^\prime+(x^\ell)$ and hence $\cL(G^\prime)=\{f\cdot x^\ell \mid f\in\cL(G)\}$. The codeword of $C_{\cL}(D,G^\prime)$ associated to $f\cdot x^\ell$ is obtained as
$$ \big( (f x^\ell)(P_1),\ldots,(f x^\ell)(\mathcal{P}_{\infty}),\ldots,(f x^\ell)(P_n) \big) = \big( f(P_1),\ldots,f(\mathcal{P}_{\infty}),\ldots,f(P_n) \big) \cdot M, $$
where $M$ is the diagonal matrix with diagonal entries $x(P_1)^\ell,\ldots,(t^{\ell q^{r-1}} x)(\mathcal{P}_{\infty})^\ell,\ldots,x(P_n)^\ell \in\mathbb F_{q^r}$, with $t$ a local parameter at $\mathcal{P}_\infty$. This means that $M$ defines a monomial equivalence between $C$ and $C_{ext}(D,G^\prime)$.
\end{proof}

The Weierstrass semigroup $H(P_\infty)$ at $P_\infty$ is known to be generated by $q^{r-1}$ and $\frac{q^r-1}{q-1}$; see \cite{BR2013}. Thus, Proposition \ref{monomiallyeq} allows us to compute the dimension of $C$ also in those cases for which the Riemann-Roch Theorem does not give a complete answer. 

\begin{corollary}
If $1\leq\ell\leq\frac{q^r-1}{q-1}-3$, then the dimension of $C$ is
$$ k=\ell+1+\frac{(q-1)}{2}\bigg \lfloor \frac{\ell}{q} \bigg \rfloor \bigg (\bigg \lfloor \frac{\ell}{q} \bigg \rfloor+1 \bigg)+\frac{(q^2-3q+2)}{2}+\Delta,$$
where,
$$\Delta= \frac{(q-1)^2}{2} \bigg(\frac{\ell}{q}-1 \bigg)^2 + \bigg( \frac{(q-3)(q-1)}{2}\bigg) \bigg( \frac{\ell}{q}-1\bigg)+\frac{q(q-1)}{2} \bigg( \frac{\ell}{q}-1\bigg),$$
 if $\ell \equiv 0 \pmod q$; 
$$\Delta= \frac{(q-1)^2}{2} \bigg \lfloor \frac{\ell}{q}\bigg \rfloor^2+\bigg( \frac{(q-3)(q-1)}{2}\bigg)\bigg \lfloor \frac{\ell}{q} \bigg \rfloor +\frac{q(q-1)}{2} \bigg \lfloor \frac{\ell}{q} \bigg \rfloor,$$ 
 if  $q \equiv q-1 \pmod q$;
$$\Delta= \frac{(q-1)}{2} \bigg [ \bigg( \ell- \bigg \lfloor \frac{\ell}{q} \bigg \rfloor q \bigg) \bigg \lfloor \frac{\ell}{q} \bigg \rfloor^2+\bigg( q-\ell+\bigg \lfloor \frac{\ell}{q} \bigg \rfloor q-1\bigg) \bigg( \bigg \lfloor \frac{\ell}{q} \bigg \rfloor-1\bigg)^2\bigg ]+\bigg( \frac{q-3}{2}\bigg) \bigg[ \bigg(\ell-\bigg \lfloor \frac{\ell}{q} \bigg \rfloor q\bigg) \bigg \lfloor \frac{\ell}{q} \bigg \rfloor$$
$$+\bigg(q-\ell+\bigg \lfloor \frac{\ell}{q} \bigg \rfloor q -1\bigg) \bigg( \bigg \lfloor \frac{\ell}{q} \bigg \rfloor-1\bigg)\bigg]+\frac{1}{2} \bigg \lfloor \frac{\ell}{q} \bigg \rfloor \bigg( \ell - \bigg \lfloor \frac{\ell}{q} \bigg \rfloor q\bigg) \bigg( \ell - \bigg \lfloor \frac{\ell}{q} \bigg \rfloor q +1\bigg)$$
$$ + \frac{1}{2} \bigg ( \bigg \lfloor \frac{\ell}{q} \bigg \rfloor -1\bigg) \bigg(q-1-\ell+ \bigg \lfloor \frac{\ell}{q} \bigg \rfloor q\bigg) \bigg( q+\ell-\bigg \lfloor \frac{\ell}{q} \bigg \rfloor q\bigg),$$
 otherwise.
\end{corollary}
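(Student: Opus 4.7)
The plan is to reduce the computation of $k$ to counting non-gaps of the Weierstrass semigroup at $\mathcal{P}_\infty$ and then to evaluate the resulting sum by a case analysis on the residue of $\ell$ modulo $q$.

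First I would use Proposition \ref{monomiallyeq} to replace $C$ by the monomially equivalent extended code $C_{ext}(D,G')$ with $G'=\ell q^{r-1}\mathcal{P}_\infty$. In the range $1\leq\ell\leq\frac{q^r-1}{q-1}-3$ one checks that $\deg(G')=\ell q^{r-1}<n$, so the formula for extended codes recalled earlier gives $k=\ell(G')$. However $\deg(G')$ lies below $2g-2$ in this range, so Riemann--Roch is not sufficient; instead I would invoke the standard identity $\ell(m\mathcal{P}_\infty)=|H(\mathcal{P}_\infty)\cap[0,m]|$ together with the description $H(\mathcal{P}_\infty)=\langle q^{r-1},\beta\rangle$ recalled in the paper, where $\beta:=\frac{q^r-1}{q-1}$.

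Since $\gcd(q^{r-1},\beta)=1$ and $\ell q^{r-1}<q^{r-1}\beta$, every non-gap $n\leq\ell q^{r-1}$ admits a unique representation $n=aq^{r-1}+b\beta$ with $a,b\in\mathbb{Z}_{\geq 0}$, whence
$$ k \;=\; \#\bigl\{(a,b)\in\mathbb{Z}_{\geq 0}^2 \,:\, aq^{r-1}+b\beta\leq\ell q^{r-1}\bigr\}. $$
Writing $\beta=q^{r-1}+\gamma$ with $\gamma:=\frac{q^{r-1}-1}{q-1}$, and using that $\gcd(\gamma,q)=1$ implies $b\gamma/q^{r-1}\notin\mathbb{Z}$ for $1\leq b<q^{r-1}$, the number of valid $a$ for each fixed $b\geq 1$ equals $\ell-b-\lceil b\gamma/q^{r-1}\rceil+1$, while for $b=0$ it equals $\ell+1$. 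Summing over $b$ from $0$ to $b_{\max}:=\lfloor\ell q^{r-1}/\beta\rfloor$ expresses $k$ in terms of $\ell$, $b_{\max}$, and $T:=\sum_{b=1}^{b_{\max}}\lceil b\gamma/q^{r-1}\rceil$.

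The last step is to evaluate $T$ in closed form. Since $\gamma/q^{r-1}=1/q+1/q^2+\cdots+1/q^{r-1}$, writing $b=jq+s$ with $0\leq s<q$ permits an explicit formula for $\lceil b\gamma/q^{r-1}\rceil$ in terms of $j$ and $s$, and collecting contributions produces polynomial expressions in $\lfloor\ell/q\rfloor$ and in the residue $\ell-\lfloor\ell/q\rfloor q$. The main obstacle is the boundary analysis at $b=b_{\max}$: according to whether $\ell\equiv 0\pmod q$ (the last block of $q$ consecutive values of $b$ is filled completely), $\ell\equiv q-1\pmod q$ (the last block is empty), or $\ell$ has any other residue (a partial last block), one obtains the three distinct expressions for $\Delta$ stated in the corollary. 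Once this bookkeeping is carried out, the quadratic leading term $\frac{q-1}{2}\lfloor\ell/q\rfloor(\lfloor\ell/q\rfloor+1)$ and the correction terms match directly with the stated formula for $k$.
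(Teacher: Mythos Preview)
Your overall strategy matches the paper's: use Proposition~\ref{monomiallyeq} to reduce to the one-point divisor $G'=\ell q^{r-1}\mathcal P_\infty$, identify $k$ with the number of non-gaps at $\mathcal P_\infty$ up to $\ell q^{r-1}$, express this as a lattice-point count in the semigroup $\langle q^{r-1},c\rangle$ with $c=(q^r-1)/(q-1)$, and evaluate the resulting sum by a residue analysis modulo $q$.

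The difference is which variable you sum over. The paper fixes the $q^{r-1}$-coefficient $i$ (ranging over $0,\dots,\ell$) and counts admissible $c$-coefficients, obtaining
\[
k=\ell+1+\sum_{s=0}^{\ell}\left\lfloor \frac{sq^{r-1}}{c}\right\rfloor,
\]
then writes $s=aq+b$ with $1\le b\le q$ and shows directly that $\lfloor(aq+b)q^{r-1}/c\rfloor=a(q-1)+b-1$ for $1\le b\le q-1$ in the relevant range. The case split on $\ell\bmod q$ then arises transparently from the upper summation limits $\lfloor(\ell-b)/q\rfloor$, which are constant in $b$ precisely when $\ell\equiv 0$ or $\ell\equiv q-1\pmod q$.

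You instead fix the $c$-coefficient $b$ and sum to $b_{\max}=\lfloor\ell q^{r-1}/c\rfloor$. This is legitimate, but your boundary explanation is not quite right: in your parametrisation $b=jq+s$, the natural case split is on $b_{\max}\bmod q$, not on $\ell\bmod q$, and $b_{\max}=\lfloor\ell q^{r-1}/c\rfloor$ is not congruent to anything simple modulo $q$ in terms of $\ell$. So the statement that $\ell\equiv 0$ corresponds to ``last block of $b$-values completely filled'' and $\ell\equiv q-1$ to ``last block empty'' does not hold as written; recovering the three cases of the corollary from your sum requires an extra translation step. The paper's summation order avoids this, since the outer sum runs exactly from $0$ to $\ell$ and the mod-$q$ behaviour of $\ell$ governs the inner limits directly.
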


\begin{proof}
Let $c:=(q^r-1)/(q-1)$.
By the assumption on $\ell$, $\deg(G)<n$; hence, $k=\ell(G)$.
From Proposition \ref{monomiallyeq}, $k=\ell(G^\prime)$ with $G^\prime=\ell q^{r-1}\mathcal{P}_{\infty}$. This means that $k$ equals the number of non-gaps $h\in H(\mathcal{P}_\infty)$ at $\mathcal{P}_\infty$ satisfying $h\leq\ell q^{r-1}$. From \cite{G2003} (see also \cite{BR2013}), $k$ is the number of couples $(i,j)\in\mathbb N^2$ such that
$$ 0\leq i<q^r,\quad 0\leq j<q^{r-1},\quad i q^{r-1}+j c \leq \ell q^{r-1}. $$
Since $\ell\leq c-3$, this implies
$$ k=\sum_{i=0}^{\ell} \left(\left\lfloor\frac{(\ell-i)q^{r-1}}{c}\right\rfloor+1\right) =\ell+1+\sum_{s=0}^{\ell}\left\lfloor\frac{ s q^{r-1}}{c}\right\rfloor. $$
Write $s=aq+b$ with $a\geq0$ and $1\leq b\leq q$. The condition $s\leq\ell$ is equivalent to $a\leq \lfloor\frac{\ell-b}{q}\rfloor$ when $b<q$, and to $a\leq \lfloor\frac{\ell}{q}\rfloor-1$ when $b=q$. Hence, 
\begin{equation}\label{conto1}
 k =\ell+1+\sum_{a=0}^{\lfloor\frac{\ell}{q}\rfloor-1}\left\lfloor\frac{(aq+q)q^{r-1}}{c}\right\rfloor + \sum_{b=1}^{q-1} \sum_{a=0}^{\lfloor\frac{\ell-b}{q}\rfloor} \left\lfloor\frac{(aq+b)q^{r-1}}{c}\right\rfloor.
\end{equation}
By direct computation,
\begin{small}
\begin{equation}\label{conto2}
\sum_{a=0}^{\lfloor\frac{\ell}{q}\rfloor-1}\left\lfloor\frac{(aq+q)q^{r-1}}{c}\right\rfloor = \sum_{a=0}^{\lfloor\frac{\ell}{q}\rfloor-1}\left\lfloor(a+1)(q-1)+\frac{a+1}{c}\right\rfloor = \sum_{a=0}^{\lfloor\frac{\ell}{q}\rfloor-1}(a+1)(q-1) = \frac{1}{2}(q-1)\left\lfloor \frac{\ell}{q} \right\rfloor \left(\left\lfloor \frac{\ell}{q} \right\rfloor+1\right).
\end{equation}
\end{small}
Also,
$$ \frac{(aq+b)q^{r-1}}{c} = a(q-1)+b-1+ \frac{q^r-1+a(q-1)-b(q^{r-1}-1)}{q^r-1}. $$
Assume that $1\leq b\leq q-1$ and $0\leq a\leq \left\lfloor\frac{\ell-b}{q}\right\rfloor \leq \left\lfloor\frac{\ell}{q}\right\rfloor$. By the assumption on $\ell$ follows $a\leq q\frac{q^{r-2}-1}{q-1}$. Thus,
$$ \frac{q^r-1+a(q-1)-b(q^{r-1}-1)}{q^r-1}>0, \quad \frac{q^r-1+a(q-1)-b(q^{r-1}-1)}{q^r-1}<1, $$
so that $\left\lfloor\frac{(aq+b)q^{r-1}}{c}\right\rfloor=a(q-1)+b-1$.
Thus,
$$ \sum_{b=1}^{q-1} \sum_{a=0}^{\lfloor\frac{\ell-b}{q}\rfloor} \left\lfloor\frac{(aq+b)q^{r-1}}{c}\right\rfloor = \sum_{b=1}^{q-1} \sum_{a=0}^{\lfloor\frac{\ell-b}{q}\rfloor}\left(a(q-1)+b-1\right) =$$
$$\frac{(q-1)}{2} \sum_{b=1}^{q-1} \left\lfloor\frac{\ell-b}{q}\right\rfloor^2 + \bigg( \frac{q-3}{2} \bigg) \sum_{b=1}^{q-1} \left\lfloor\frac{\ell-b}{q}\right\rfloor + \sum_{b=1}^{q-1} b \left\lfloor\frac{\ell-b}{q}\right\rfloor + \frac{q^2-3q+2}{2}.$$
Denote by,
$$A=\frac{(q-1)}{2} \sum_{b=1}^{q-1} \left\lfloor\frac{\ell-b}{q}\right\rfloor^2 , \quad B=\bigg( \frac{q-3}{2} \bigg) \sum_{b=1}^{q-1} \left\lfloor\frac{\ell-b}{q}\right\rfloor, \quad C= \sum_{b=1}^{q-1} b \left\lfloor\frac{\ell-b}{q}\right\rfloor .$$
We note that for a given $b=1,\ldots,q-1$, holds that $\bigg \lfloor \frac{\ell-b}{q} \bigg \rfloor \ne \bigg \lfloor \frac{\ell-b-1}{q} \bigg \rfloor$ if and only if $\ell-b \equiv 0 \pmod q$.
 Thus if $\ell \equiv 0 \pmod q$ then $\bigg \lfloor \frac{\ell-b}{q} \bigg \rfloor=\frac{\ell}{q} - \bigg \lceil \frac{b}{q} \bigg \rceil=\frac{\ell}{q}-1$, for every $b=1,\ldots,q-1$; if $\ell \equiv q-1 \pmod q$ then $\bigg \lfloor \frac{\ell-b}{q} \bigg \rfloor=\bigg \lfloor \frac{\ell}{q} \bigg \rfloor$; while $\bigg \lfloor \frac{\ell-b}{q} \bigg \rfloor=\bigg \lfloor \frac{\ell}{q} \bigg \rfloor$ for $b=1,\ldots,\ell-\bigg \lfloor \frac{\ell}{q} \bigg \rfloor q$ and $\bigg \lfloor \frac{\ell-b}{q} \bigg \rfloor=\bigg( \bigg \lfloor \frac{\ell}{q} \bigg \rfloor-1\bigg)$ for $b=\ell-\bigg \lfloor \frac{\ell}{q} \bigg \rfloor q+1,\ldots,q-1$, if $\ell \not\equiv 0,q-1 \pmod q$. In particular this implies that

$$A= \frac{(q-1)}{2}\sum_{b=1}^{q-1}  \bigg( \frac{\ell}{q}-1\bigg)^2= \frac{(q-1)^2}{2} \bigg( \frac{\ell}{q}-1\bigg)^2,$$
 if  $\ell \equiv 0 \pmod q$, 
$$A=\frac{(q-1)}{2} \sum_{b=1}^{q-1} \bigg \lfloor \frac{\ell}{q} \bigg \rfloor^2=\frac{(q-1)^2}{2} \bigg \lfloor \frac{\ell}{q} \bigg \rfloor^2,$$
if $\ell \equiv q-1 \pmod q$, and
$$A=\frac{(q-1)}{2}\sum_{b=1}^{\ell-\big \lfloor \frac{\ell}{q} \big \rfloor q} \bigg \lfloor \frac{\ell}{q} \bigg \rfloor^2+ \frac{(q-1)}{2} \sum_{b=\ell-\big \lfloor \frac{\ell}{q} \big \rfloor q+1}^{q-1} \bigg( \bigg \lfloor \frac{\ell}{q} \bigg \rfloor-1\bigg)^2=$$
$$\frac{(q-1)}{2} \bigg[  \bigg( \ell- \bigg \lfloor \frac{\ell}{q} \bigg \rfloor q \bigg) \bigg \lfloor \frac{\ell}{q} \bigg \rfloor^2+\bigg( q-\ell+\bigg \lfloor \frac{\ell}{q} \bigg \rfloor q-1\bigg) \bigg( \bigg \lfloor \frac{\ell}{q} \bigg \rfloor-1\bigg)^2\bigg],$$
 otherwise. Analagously,
$$B= \frac{(q-3)}{2}\sum_{b=1}^{q-1}  \bigg( \frac{\ell}{q}-1\bigg)= \frac{(q-3)(q-1)}{2} \bigg( \frac{\ell}{q}-1\bigg),$$
if $\ell \equiv 0 \pmod q$,
$$B= \frac{(q-3)}{2} \sum_{b=1}^{q-1} \bigg \lfloor \frac{\ell}{q} \bigg \rfloor=\frac{(q-1)(q-3)}{2} \bigg \lfloor \frac{\ell}{q} \bigg \rfloor,$$
 if $\ell \equiv q-1 \pmod q$, while 
$$B=\frac{(q-3)}{2}\sum_{b=1}^{\ell-\big \lfloor \frac{\ell}{q} \big \rfloor q} \bigg \lfloor \frac{\ell}{q} \bigg \rfloor+ \frac{(q-3)}{2} \sum_{b=\ell-\big \lfloor \frac{\ell}{q} \big \rfloor q+1}^{q-1} \bigg( \bigg \lfloor \frac{\ell}{q} \bigg \rfloor-1\bigg)=$$
$$\bigg( \frac{q-3}{2}\bigg) \bigg[ \bigg(\ell-\bigg \lfloor \frac{\ell}{q} \bigg \rfloor q\bigg) \bigg \lfloor \frac{\ell}{q} \bigg \rfloor+\bigg(q-\ell+\bigg \lfloor \frac{\ell}{q} \bigg \rfloor q -1\bigg) \bigg( \bigg \lfloor \frac{\ell}{q} \bigg \rfloor-1\bigg)\bigg]$$
otherwise, and
$$C=\sum_{b=1}^{q-1} b \bigg( \frac{\ell}{q}-1\bigg)=\frac{q(q-1)}{2}\bigg( \frac{\ell}{q}-1\bigg),$$
if $\ell \equiv 0 \pmod q$,
$$C=\sum_{b=1}^{q-1} b \bigg \lfloor \frac{\ell}{q} \bigg \rfloor=\frac{q(q-1)}{2} \bigg \lfloor \frac{\ell}{q} \bigg \rfloor,$$
if $\ell \equiv q-1 \pmod q$ and
$$C=\sum_{b=1}^{\ell-\big \lfloor \frac{\ell}{q} \big \rfloor q} b \bigg \lfloor \frac{\ell}{q} \bigg \rfloor+\sum_{b=\ell-\big \lfloor \frac{\ell}{q} \big \rfloor q+1}^{q-1} b \bigg( \bigg \lfloor \frac{\ell}{q} \bigg \rfloor-1\bigg)=$$
$$\frac{1}{2} \bigg \lfloor \frac{\ell}{q} \bigg \rfloor \bigg( \ell - \bigg \lfloor \frac{\ell}{q} \bigg \rfloor q\bigg) \bigg( \ell - \bigg \lfloor \frac{\ell}{q} \bigg \rfloor q +1\bigg)+ \frac{1}{2} \bigg ( \bigg \lfloor \frac{\ell}{q} \bigg \rfloor -1\bigg) \bigg(q-1-\ell+ \bigg \lfloor \frac{\ell}{q} \bigg \rfloor q\bigg) \bigg( q+\ell-\bigg \lfloor \frac{\ell}{q} \bigg \rfloor q\bigg),$$
otherwise. The claim now follows writing $k=\ell+1+\frac{(q-1)}{2}\bigg \lfloor \frac{\ell}{q} \bigg \rfloor \bigg (\bigg \lfloor \frac{\ell}{q} \bigg \rfloor+1 \bigg)+\frac{(q^2-3q+2)}{2}+A+B+C$.
\end{proof}

We show that the automorphism group of $\cN_{q,r}$ is inherited by the code $C$.

\begin{proposition}
The automorphism group of $C$ has a subgroup isomorphic to 
$$ (Aut_K(\cN_{q,r})\rtimes Aut_K(\mathbb F_{q^r}))\rtimes \mathbb F_{q^r}^*. $$
\end{proposition}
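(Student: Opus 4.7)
The plan is to apply Proposition \ref{tivoglioiniettivo} with $\cX = \cN_{q,r}$, the field being $\mathbb F_{q^r}$, and the divisors $D$ and $G$ defined above. Three things need to be verified: (a) that the $K$-automorphism group of $\cN_{q,r}$ is in fact defined over $\mathbb F_{q^r}$; (b) that every such automorphism preserves both $G$ and $D$; and (c) that every non-trivial automorphism fixes strictly fewer than $n = q^{2r-1}+1-q^{r-1}$ of the $\mathbb F_{q^r}$-rational places of $\cN_{q,r}$.

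For (a) I would simply note that in the description of $Aut_K(\cN_{q,r}) = G \rtimes C$ supplied by Theorem \ref{AutNormTrace}, the parameter $a$ of the translation part ranges over the $\mathbb F_{q^r}$-subspace $\ker(Tr_{q^r|q})$ and the parameter $b$ of the cyclic part ranges over $\mathbb F_{q^r}^*$, so $Aut_K(\cN_{q,r}) = Aut_{\mathbb F_{q^r}}(\cN_{q,r})$. For (b), both families of generators send $x$ to a scalar multiple of $x$ by an element of $\mathbb F_{q^r}^*$ (trivially for the translations, and by $b$ for the elements of $C$), hence preserve the zero divisor of $x$, which is $\sum_{P \in \Omega}P$; this gives $\sigma(G)=G$. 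Moreover every $\sigma \in Aut_{\mathbb F_{q^r}}(\cN_{q,r})$ permutes $\cN_{q,r}(\mathbb F_{q^r})$ and fixes $P_\infty$, hence preserves $\Theta$ and $D$. Thus $\sigma(G)=G$ and $\sigma(D)=D$, and in particular $\sigma(G) \approx_D G$, so $Aut_{\mathbb F_{q^r},D,G}(\cN_{q,r}) = Aut_{\mathbb F_{q^r}}(\cN_{q,r})$.

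The main step is (c). A generic element of $G \rtimes C$ acts as $\sigma : (x,y) \mapsto (bx, b^N y + c)$, where $N=(q^r-1)/(q-1)$, $b \in \mathbb F_{q^r}^*$, and $c \in \ker(Tr_{q^r|q})$ (note $b^N \in \mathbb F_q^*$). Its affine fixed points satisfy $x(b-1)=0$ and $y(1-b^N)=c$. A short case analysis on $b$, $b^N$, and $c$, together with the fact that $P_\infty$ is always fixed, shows that any non-identity $\sigma$ fixes at most $q^{r-1}+1$ places on $\cN_{q,r}$; the extremal case arises when $b \ne 1$, $b^N = 1$ and $c=0$, in which $\sigma$ fixes $P_\infty$ and the whole of $\Omega$. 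Since $r \geq 3$ yields $2q^{r-1} < q^{2r-1}$, one has $q^{r-1}+1 < n$, so the hypothesis of Proposition \ref{tivoglioiniettivo} is verified.

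Once (a)--(c) are in hand, Proposition \ref{tivoglioiniettivo} produces the claimed subgroup of $Aut(C)$ directly. I do not expect any substantive obstacle: the only slightly tedious piece is the fixed-point enumeration in step (c), and even that is manageable because the group acts by explicit affine transformations on $(x,y)$.
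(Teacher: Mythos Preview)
Your proposal is correct and follows essentially the same strategy as the paper: verify that $Aut_K(\cN_{q,r})$ is $\mathbb F_{q^r}$-rational, that it preserves $G$ and $D$, and that the fixed-place bound required for Proposition~\ref{tivoglioiniettivo} holds. The only minor difference is in step~(c): the paper bounds the number of fixed places by observing that fixed places must lie in short orbits and that the two short orbits of $Aut_K(\cN_{q,r})$ are $\{\mathcal P_\infty\}$ and $\Omega$, of total size $q^{r-1}+1$, whereas you reach the same bound by an explicit affine fixed-point analysis; both arguments are short and yield the same constant.
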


\begin{proof}
The group $Aut_K(\cN_{q,r})$ is defined over $\mathbb F_{q^r}$, so that $Aut_{\mathbb F_{q^r}}(\cN_{q,r})=Aut_K(\cN_{q,r})$.
The support $supp(G)$ of the divisor $G$ is an orbit of $Aut_K(\cN_{q,r})$, and $Aut_K(\cN_{q,r})$ acts on the support $supp(D)=\cN_{q,r}(\mathbb F_{q^r})\setminus supp(G)$ of the divisor $D$. Also, all places contained in $supp(G)$ have the same weight in $G$, which implies $\sigma(G)=G$ for any $\sigma\in Aut_K(\cN_{q,r})$; analogously, $\sigma(D)=D$.
Therefore, $Aut_{\mathbb F_{q^r},D,G}(\cN_{q,r})$ is isomorphic to $Aut_K(\cN_{q,r})$.

From the proof of Theorem \ref{AutNormTrace} follows that $Aut_K(\cN_{q,r})$ has just two short orbits on $\cN_{q,r}$.
Namely, one short orbit is the singleton $\{\mathcal{P}_\infty\}$, which is fixed by the whole group $Aut_K(\cN_{q,r})$; the other short orbit is the set $\Omega$ of the zeros of $x$, which has size $q^{r-1}$ and is fixed pointwise by the complement $H$ of the $p$-group $G$.
Hence, any non-trivial element $\sigma\in Aut_K(\cN_{q,r})$ is fixes at most $N:=q^{r-1}+1$ places on $\cN_{q,r}$.
Since the length $n$ of $C$ is bigger than $N$, the claim follows from Proposition \ref{tivoglioiniettivo}.
\end{proof}

\section*{Acknowledgement}
The first author would like to thank his supervisor, Prof. Massimiliano Sala.

\begin{flushleft}
Matteo Bonini\\
Dipartimento di Matematica,\\
University of Trento,\\
e-mail: {\sf matteo.bonini@unitn.it}
\end{flushleft}

\begin{flushleft}
Maria Montanucci\\
Dipartimento di Matematica, Informatica ed Economia,\\
University of Basilicata,\\
e-mail: {\sf maria.montanucci@unibas.it}
\end{flushleft}

\begin{flushleft}
Giovanni Zini\\
Dipartimento di Matematica e Informatica,\\
University of Florence,\\
e-mail: {\sf gzini@math.unifi.it}
\end{flushleft}

\end{document}